\newtheorem{tw}{Theorem}[section]
\newtheorem{lm}[tw]{Lemma}
\newtheorem{pr}[tw]{Proposition}
\theoremstyle{definition}
\newtheorem{df}{Definition}[section]
\newcommand{\R}{\mathbb{R}}
\newcommand{\Z}{\mathbb{Z}}
\newcommand{\N}{\mathbb{N}}
\newcommand{\T}{\mathbb{T}}
\newcommand{\cB}{\mathcal{B}}
\newcommand{\cT}{\mathcal{T}}
\newcommand{\cR}{\mathcal{R}}
\newcommand{\x}{S(x,s)}
\newcommand{\y}{S(x+\delta_{k_j},s)}
\newcommand{\1}{\mathbbm{1}}
\newcommand{\bea}{\begin{eqnarray}}
  \newcommand{\eea}{\end{eqnarray}}
  \newcommand{\beab}{\begin{eqnarray*}}
  \newcommand{\eeab}{\end{eqnarray*}}
  \newcommand{\be}{\begin{equation}}
  \newcommand{\ee}{\end{equation}}
\providecommand{\noopsort}[1]{} 
\title{On isomorphism problem for von Neumann flows with one discontinuity}
\author{Adam Kanigowski, Anton Solomko}
\begin{document}
\maketitle

\begin{abstract}
We prove that the absolute value of the slope is a (measure theoretic) invariant in the class of von Neumann special flows with one discontinuity, i.e.\ two ergodic von Neumann flows with one discontinuity are not isomorphic if the slopes of the roof functions have different absolute values, regardless of the irrational rotation in the base.
\end{abstract}

\section{Introduction}
One of the central problems in ergodic theory is the problem of isomorphism of measure preserving systems.
The natural way of dealing with this problem is to introduce (measure theoretic) {\em invariants}. The most classical invariant is given by the {\em entropy}.
It allows to tackle the isomorphism problem between two systems displaying exponential orbit growth.
The central example here are Bernoulli shifts for which, by Ornstein theory \cite{Orn}, the entropy is a complete invariant.
Notice however that entropy gives no information on isomorphism between systems of orbit growth slower than exponential.
For such systems one can study {\em slow entropy}, \cite{KT97}.
This invariant measures the orbit growth in a {\em scale} which can be chosen depending on the class of systems one deals with (e.g. polynomial, logarithmic, etc.).
Other natural invariants are various mixing, spectral or rank properties.

In this paper we consider the isomorphism problem in the class of von Neumann flows \cite{vN32}, i.e.\ special flows built over the rotation $R_\alpha$ of the circle by an irrational number $\alpha$ and under a piecewise $C^1$-function $f$ with non-zero sum of jumps.
Von Neumann flows are weakly mixing \cite{vN32}, never mixing \cite{Ko72}, have purely singular spectrum \cite{FL04}.
Moreover, if the base rotation is of bounded type, they are mildly mixing \cite{FL06}.
Recently it was shown in \cite{KaSo} that the rank of von Neumann flows with roof functions having one discontinuity is infinite.
Notice also that the orbit growth of von Neumann flows is linear (since the sum of jumps is nonzero).
Therefore it seems that slow entropy (in any scale) in the class of von Neumann flows is always constant and hence it cannot be used for the isomorphism problem.
In view of the above properties (mixing, rank, spectrum and slow entropy) it remained an open question what is the isomorphic structure of the class of von Neumann flows.

In the paper we prove that, when the roof function has one discontinuity, the absolute value of the slope is an isomorphism invariant for von Neumann flows.
As a corollary, we get an uncountable family of pairwise non-isomorphic von Neumann flows.
Recall, that two probability preserving flows $\cT=(T_t)_{t\in\R}$ on $(X,\mu)$ and $\cR=(R_t)_{t\in\R}$ on $(Y,\nu)$ are \emph{isomorphic} if there exists an invertible measure preserving transformation $S\colon X\to Y$ such that $S\circ T_t = R_t\circ S$ for every $t\in\R$.
Our main result is the following.

Let $\cT=(T^f_t)_{t\in\R}$ and $\cR=(R^g_t)_{t\in\R}$ be two special flows over irrational rotations $T,R\colon \T\to\T$, $T(x) = x+\alpha$, $R(x)=x+\beta$, under roof functions $f,g\colon\T\to\R_+$, $\int_\T f(x)dx = \int_\T g(x)dx = 1$, of the form
$$
f(x)=A_f\{x\}+f_{ac}(x) \quad \text{ and } \quad g(x)=A_g\{x\}+g_{ac}(x)
$$
respectively, where $\{x\}$ stands for the fractional part of $x$, $A_f,A_g\neq 0$ and $f_{ac},g_{ac}\in C^1(\T)$.

\begin{tw}\label{main}
If $|A_f|\neq|A_g|$, then the flows $\cT=(T^f_t)_{t\in\R}$ and $\cR=(R^g_t)_{t\in\R}$ are not isomorphic.
\end{tw}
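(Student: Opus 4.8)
The plan is to produce an isomorphism invariant of a von Neumann flow $\cT^f$ that sees $|A_f|$, and to extract it from the \emph{limiting self‑joinings of the flow along escaping times} --- equivalently, from the weak closure of $\{T^f_t:t\in\R\}$ in the space of Markov operators of $L^2(X,\mu)$. The point is covariance under isomorphism: for each $c$ the off‑diagonal self‑joining $\Delta^{\cT}_c:=(\mathrm{Id}\times T^f_c)_*\mu_\Delta$ is carried by an isomorphism $S\colon\cT\to\cR$ to $\Delta^{\cR}_c$ with the \emph{same} parameter $c$ (because $S\circ T^f_t=R^g_t\circ S$), and $(S\times S)_*$ is continuous for weak convergence of joinings; hence any weak limit of $\Delta^{\cT}_{t_n}$ along $t_n\to\infty$ is carried to the corresponding limit of $\Delta^{\cR}_{t_n}$. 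So it suffices to read $|A_f|$ off this family of limits, using that $\cT^f$ is weakly mixing (so that, combined with a Ratner‑type control of orbit divergence available for these flows, the limit joinings are of the form $\int_\R\Delta^{\cT}_t\,d\Lambda(t)$, possibly plus a product component, and $\Lambda$ is recovered from the operator).

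To produce such joinings I would look at $T^f_{q_m}$, where $p_m/q_m$ are the convergents of $\alpha$. Since $q_m\alpha\to 0$ in $\T$ and $\int_\T f=1$, on the bulk of $X$ one has $T^f_{q_m}(x,s)=\bigl(x,\,s-(f^{(q_m)}(x)-q_m)\bigr)$, and writing $f=A_f\{\cdot\}+f_{ac}$ with $\int_\T\{\cdot\}=\tfrac12$ gives
\[
f^{(q_m)}(x)-q_m \;=\; A_f\Bigl(\sum_{j<q_m}\{x+j\alpha\}-\tfrac{q_m}{2}\Bigr)\;+\;\Bigl(\sum_{j<q_m}f_{ac}(x+j\alpha)-q_m\!\int_\T f_{ac}\Bigr).
\]
The second summand is $O(1)$ by the Denjoy--Koksma inequality and should be negligible relative to the first along a suitable subsequence (for smooth enough $f_{ac}$ its $k$‑th Fourier coefficient is $\widehat{f_{ac}}(k)\,\tfrac{1-e^{2\pi ikq_m\alpha}}{1-e^{2\pi ik\alpha}}$ and $\|q_m\alpha\|\to 0$), whereas the first summand is $|A_f|$ times a quantity intrinsic to the rotation, and bounded by $|A_f|$. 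Passing to a subsequence along which the distribution of $x\mapsto f^{(q_m)}(x)-q_m$ converges, the joinings $\Delta^{\cT}_{q_m}$ converge weakly to $\int_\R\Delta^{\cT}_t\,d\Lambda(t)$ with $\mathrm{supp}\,\Lambda$ a bounded set whose ``size'' is $\asymp|A_f|$ and from which the contribution of $f_{ac}$ has been suppressed. Thus the limiting self‑joinings of $\cT^f$ carry an intrinsic length scale comparable to $|A_f|$.

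To turn this into a usable invariant one must pin the scale down exactly. I would set $D(\cT^f):=\sup\bigl\{\diam(\mathrm{supp}\,\Lambda):\int_\R\Delta^{\cT}_t\,d\Lambda(t)\text{ is a limiting self-joining of }\cT^f\bigr\}$. The construction above gives a lower bound $D(\cT^f)\ge c^{\ast}|A_f|$; for the matching upper bound one needs a Ratner‑type control of the divergence of nearby orbits of a von Neumann flow with one discontinuity: for $x,y$ close and along the relevant times, $f^{(n)}(x)-f^{(n)}(y)$ decomposes into a smooth coboundary term (annihilated by the choice of return times via Denjoy--Koksma) and contributions of size $\pm A_f$ produced by passages of the orbit near the discontinuity, with net vertical displacement staying in $[-c^{\ast}|A_f|,c^{\ast}|A_f|]$. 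Granting $D(\cT^f)=c^{\ast}|A_f|$ with $c^{\ast}$ a \emph{universal} constant, covariance yields $c^{\ast}|A_f|=D(\cT^f)=D(\cR^g)=c^{\ast}|A_g|$, hence $|A_f|=|A_g|$, contradicting the hypothesis; therefore $\cT$ and $\cR$ are not isomorphic.

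The hard part --- and essentially the whole content of the proof --- will be making this bound two‑sided, sharp, and \emph{uniform over the base rotation}. On the lower‑bound side one must handle the ``fringe'' of the Rokhlin towers (the part of $X$ on which the formula for $T^f_{q_m}$ degenerates, whose measure does not obviously vanish), and for bounded‑type $\alpha$, where the points $\{j\alpha\}_{j<q_m}$ are not nearly equidistributed, one must still arrange an approximating sequence for which the rotation‑intrinsic factor of the limiting distribution --- and hence the constant $c^{\ast}$ --- is known; one must also genuinely separate the jump‑induced shear (proportional to $A_f$, and canonical) from the smooth‑induced shear (depending on $\|f_{ac}'\|$ and on $\alpha$), which is delicate precisely because for merely $C^1$ roof perturbations the Birkhoff sums over $q_m$ need not tend to a constant. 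On the upper‑bound side one needs the full strength of the orbit‑divergence estimates for these flows, in particular ruling out exceptional times at which a larger shear could accumulate. I expect this is where the bulk of the technical work lies.
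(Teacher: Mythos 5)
Your conceptual set-up is sound in its first step: the covariance observation that an isomorphism $S$ carries any weak limit of the off-diagonal self-joinings $\Delta^{\cT}_{t_n}$ to the corresponding limit for $\cR$, with the same $t_n$, is correct, and this is a legitimate source of isomorphism invariants. But the specific invariant you propose to extract --- a ``diameter'' of the shearing measure $\Lambda$, claimed to equal a universal constant times $|A_f|$ --- does not, as you set it up, determine $|A_f|$, and the gap is precisely the one you flag at the end as ``delicate.''

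There are two distinct obstructions. First, the contribution of the $C^1$ part is \emph{not} suppressed along denominators: Denjoy--Koksma gives only $\bigl|f_{ac}^{(q_m)}(x)-q_m\!\int f_{ac}\bigr|\le \mathrm{Var}(f_{ac})$, and for a generic pair $(\alpha,f_{ac})$ this bound is attained up to constants and the sums do \emph{not} tend to $0$; so the support of $\Lambda$ has width $\asymp |A_f| + \mathrm{Var}(f_{ac})$, not $\asymp |A_f|$. Since $f_{ac}$ is an arbitrary $C^1$ function (subject only to $f>0$, $\int f=1$), one can make $\mathrm{Var}(f_{ac})$ dominate $|A_f|$, and the ``scale'' of the limit joinings then simply fails to see $|A_f|$. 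Second, even setting $f_{ac}=0$, the quantity $\sum_{j<q_m}\{x+j\alpha\}-q_m/2$ has a limiting distribution that depends on $\alpha$ (bounded vs.\ unbounded type, and within bounded type on the partial quotients), so there is no \emph{universal} $c^\ast$; in particular you would only get an invariant relative to a fixed base rotation, which is not what the theorem asserts. Your lower bound $D(\cT^f)\ge c^\ast|A_f|$ inherits the same $\alpha$-dependence, and the matching upper bound is false in general because large shears \emph{do} accumulate along non-denominator times.

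The paper takes an entirely different route that sidesteps both obstructions by exploiting the \emph{suddenness} of the jump rather than its averaged magnitude. It fixes a pair of nearby points $(x,s)$, $(x+\delta_n,s)$ whose forward (and, in the unbounded-type case, backward) orbits stay $d^f$-close until the interval $[x,x+\delta_n]$ hits the discontinuity, and then split by $A_f$ \emph{instantaneously} in the flow direction and remain so split for a definite proportion of time afterwards (Propositions~\ref{pr1} and~\ref{pr3}); by Lemma~\ref{c1} the $C^1$ part only contributes an $\eta^{3/2}$ error on this time window, which is the precise mechanism by which the smooth-induced shear is decoupled from the jump-induced shear. On the target side (Propositions~\ref{pr2} and~\ref{pr4}) it shows that for $\cR$ any two close points either split slowly by a tiny amount $\xi$ or split suddenly by $A_g$. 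Luzin and Egorov then transport the exact $A_f$-splitting pattern through a hypothetical isomorphism $S$, and the mismatch $|A_f|\neq|A_g|$ (and $A_f\not\approx\xi$) gives the contradiction. This is a pointwise, finitary, Ratner-style shearing argument with explicit Diophantine bookkeeping over two cases (bounded/unbounded type), not a weak-closure/limit-joining argument; it never needs the smooth part's Birkhoff sums to vanish, only to vary slowly over a single ``split'' event.
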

\paragraph{Idea of the proof.} As it was mentioned in the introduction, no known slow entropy type invariants can be used for Theorem \ref{main}. For von Neumann flows with one discontinuity the divergence of orbits is caused by two effects:
\emph{slow divergence} by hitting the roof and
\emph{fast divergence} by hitting the discontinuity.
The idea is to take two close points $x,y$ for $\cT$ such that they hit the discontinuity at time $t_0=t_0(x,y)$ before they have time to split by hitting the roof.
This means that before time $t_0$ they are close and at $t_0$ they split by $A_f$ (modulo some $\epsilon$ error) in the flow direction.
So if $\cT$ and $\cR$ were isomorphic, the images $S(x),S(y)$ of $x,y$ would have to be close for most of the time $[0,t_0]$ and split at $t_0$ by $A_f$ in the $\cR$-flow direction.
But points for $\cR$ either split by slow divergence and then (since this divergence is uniform) they have to split by $A_f/2$ at time $t_0/2$ which is a contradiction,
or they split by $A_g$ by hitting the discontinuity. But $|A_g|\neq |A_f|$ which again yields a contradiction.
We use some standard ergodic theory tools, i.e.\ Egorov's theorem, Luzin's theorem, to make this idea precise.

\section{Basic definitions}

We denote by $\T$ the circle group $\R/\Z$ which we will identify with the unit interval $[0,1)$.
For a real number $x$ denote by $\{x\}$ its fractional part, by $[x]=x-\{x\}$ its integer part and by $\|x\|$ its distance to the nearest integer.
Given $x,y\in\T$ with $\|x-y\|<\frac{1}{2}$, $[x,y]$ will denote the shortest interval in $\T$ connecting $x$ and $y$.
Lebesgue measure on $\T$ will be denoted by $\mu$ and Lebesgue measure on $\R$ will be denoted by $\lambda$.
For a bounded function $f\colon\T\to\R_+$, let $f_{\min}:=\inf_{\T}f$ and $f_{\max}:=\sup_{\T}f$.



\subsection{Continued fractions}

For an irrational $\alpha\in\T$, let $[0;a_1,a_2,\ldots]$ stand for the \emph{continued fraction expansion} of $\alpha$ and let $(q_n)_{n\in\N}$ denote the sequence of the \emph{denominators} of $\alpha$ (see \cite{Kh}).
The sequence $(q_n)_{n\in\N}$ is given by the recursive formulas
\begin{equation}\label{cf0}
q_0 = 1, \quad q_1 = a_1, \quad q_{n} = a_{n}q_{n-1} + q_{n-2},
\end{equation}
and the inequality
\begin{equation}\label{cf1}
\frac{1}{2 q_{n+1}} < \| q_n\alpha \| < \frac{1}{q_{n+1}}
\end{equation}
holds for every $n\in\N$.
It follows easily from the above inequality that
\begin{equation}\label{cf2}
\min_{0 \leq i < j < q_n} \|i\alpha - j\alpha\| = \|q_{n-1}\alpha\| > \frac{1}{2q_n}.
\end{equation}
More precisely, the partition of $\T$ by $0,\alpha,...,(q_n-1)\alpha$ has the form
$\{ I_n+k\alpha \,:\, 0\leq k < q_n - q_{n-1} \} \cup \{ I'_n+k\alpha \,:\, 0\leq k < q_{n-1} \}$,
where $I_n = [0,q_{n-1}\alpha]$ and $I'_n = [(-q_{n-1}+q_n)\alpha,0]\subset\T$.

We say that $\alpha$ has \emph{bounded type} if the sequence $(a_n)_{n\in\N}$ is bounded.
Otherwise $\alpha$ is said to have \emph{unbounded type}.


\subsection{Special flows}

Let $T$ be an ergodic automorphism of a standard Borel space $(X,\cB,\mu)$ (with $\mu(X)<+\infty$).
A measurable function $f\colon X\to \R$ defines a cocycle $\Z\times X \to \R$ given by
$$
f^{(n)}(x) =
\begin{cases}
f(x)+f(Tx)+\cdots+f(T^{n-1}x) &\mbox{if}\quad n>0, \\
\hfil 0 &\mbox{if}\quad n=0, \\
\hfil -(f(T^{n}x)+\cdots+f(T^{-1}x)) &\mbox{if}\quad n<0.
\end{cases}
$$
Assume that $f\in L^1(X,\cB,\mu)$ is a strictly positive function.

The \emph{special flow} $\cT = (T^f_t)_{t\in\R}$ over the \emph{base automorphism} $T$ under the \emph{roof function} $f$ is the flow acting on $(X^f,\cB^f,\mu^f)$, where $X^f = \{ (x,s) \in X\times\R \mid 0 \leq s < f(x) \}$ and $\cB^f$ and $\mu^f$ are the restrictions of $\cB \otimes \cB_\R$ and $\mu \otimes \lambda$ to $X^f$ respectively ($\lambda$ stands for Lebesgue measure on $\R$).
Under the action of the flow $\cT$ each point in $X^f$ moves vertically upward with unit speed, and we identify the point $(x,f(x))$ with $(Tx,0)$.
More precisely, for $(x,s)\in X^f$ we have
\begin{equation}\label{spflow}
T^f_t(x,s) = (T^n x, s+t-f^{(n)}(x)),
\end{equation}
where $n\in\Z$ is the unique number such that $f^{(n)}(x) \leq s+t < f^{(n+1)}(x)$.

\subsection{Von Neumann flows}

We call a function $f\colon \T \to \R$ \emph{piecewise $C^1$} if there exist $\beta_1,\ldots,\beta_k\in\T$ such that $f|_{\T\setminus \{\beta_1,\ldots,\beta_k\}}$ is $C^1$ smooth and $f_\pm(\beta_i)=\lim_{x\to \beta_i\pm} f(x)$ is finite for every $i$.
Denote by $d_i:=f_-(\beta_i)-f_+(\beta_i)$ the \emph{jump} of $f$ at point $\beta_i$.
The number $\sum_{i=1}^k d_i$ is the \emph{sum of jumps} of $f$.

\begin{df}
A \emph{von Neumann flow} is a special flow $\cT$ over a rotation $T\colon(\T,\mu)\to(\T,\mu)$ by an irrational $\alpha\in\T$ and under a piecewise $C^1$ roof function $f\colon \T\to\R_+$ with a non-zero sum of jumps.
\end{df}

We will consider the simplest case when $f$ has only one discontinuity.
Without loss of generality we may assume that $f$ is $C^1$ on $\T\setminus \{0\}$ with a jump $-A_f=f_-(0)-f_+(0)\neq 0$.
Any such $f$ can be written in the form
$$
f(x)=A_f\{x\}+f_{ac}(x),
$$
where $f_{ac}\in C^1(\T)$ and $A_f\neq 0$ are such that $f>0$.
In this case we call $A_f$ the \emph{slope} of $f$.
We assume that $\int_\T f d\mu=1$, that is we normalize the resulting measure $\mu^f$ to make it a probability measure.

\medskip
Define the following semimetric on $\T^f$:
\begin{align*}
d^f((x,s),(y,r)) := \min\{
&\|x-y\|+ |s-r|, \\
&\|T(x)-y\|+ |s-r-f(x)|, \\
&\|T^{-1}(x)-y\|+ |s-r+f(T^{-1}(x))|, \\
&\|x-T(y)\|+|s-r+f(y)|, \\
&\|x-T^{-1}(y)\|+|s-r-f(T^{-1}(y))|
\}.
\end{align*}
Informally speaking, two points $(x,s),(y,r)\in\T^f$ are $d^f$-close if their forward or backward images $T^f_t(x,s),T^f_t(y,r)$ are close in the product metric for small $t\in\R$.
The reason we consider $d^f$ instead of the standard product metric is that it makes continuous the mapping $\R\ni t\mapsto T^f_t(x,s)\in \T^f$ for every $(x,s) \in \T$: $d^f((x,s),T^f_t(x,s)) = |t|$ for $|t|<f_{\min}$.
The triangle inequality holds for $d^f$ locally: $d^f((x,s),(y,r))\leq d^f((x,s),(y,r))+d^f((y,r),(z,t))$ whenever $d^f((x,s),(y,r))+d^f((y,r),(z,t))<f_{\min}$. 
Notice also that on a set of measure arbitrary close to 1 the semimetric $d^f$ is equivalent to the product metric.



\section{Main propositions}
In this section we prove several technical propositions which will be used in the proof of Theorem~\ref{main}.
The reader can skip it and jump to the proof of Theorem~\ref{main}, referring back to this section later when needed.

Throughout this section $\cT=(T^f_t)_{t\in\R}$ is a von Neumann special flow over an irrational rotation $R_\alpha\colon\T\to\T$, $R_\alpha(x)=x+\alpha$, under a roof function $f(x)=A_f\{x\}+f_{ac}(x)$, $f_{ac}\in C^1(\T)$.
The following simple lemma will allow us to neglect the $C^1$ part of the roof function in the estimates below.

\begin{lm}[\cite{KaSo}]\label{c1}
Let $g\in C^1(\T)$.
For every $\varepsilon>0$ there exists $\delta>0$ such that for every $x,y\in\T$ with $\|x-y\|<\delta$ and every $n\in\Z$
$$
|g^{(n)}(x)-g^{(n)}(y)|<\varepsilon \max\{1,|n|\|x-y\|\}.
$$
\end{lm}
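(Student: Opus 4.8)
The statement only involves the rotation $R_\alpha$ and the function $g$, and since $R_\alpha$ is an isometry of $\T$ the first move is to reduce everything to $n>0$: the case $n=0$ is trivial, and for $n<0$ one has $g^{(n)}(x)-g^{(n)}(y)=-\sum_{j=1}^{|n|}\bigl(g(x-j\alpha)-g(y-j\alpha)\bigr)$, where each pair of points satisfies $\|(x-j\alpha)-(y-j\alpha)\|=\|x-y\|$; relabelling ($x':=x-|n|\alpha$, $y':=y-|n|\alpha$) turns this into the $n>0$ case with the same $\|x-y\|$. For $n>0$, I would fix lifts $\tilde x,\tilde y\in\R$ of $x,y$ with $h:=\tilde x-\tilde y$ and $|h|=\|x-y\|<\delta<\tfrac12$, and apply the fundamental theorem of calculus to the $1$-periodic $C^1$ extension of $g$:
\[
g^{(n)}(x)-g^{(n)}(y)=\sum_{i=0}^{n-1}\bigl(g(\tilde x+i\alpha)-g(\tilde y+i\alpha)\bigr)=h\int_0^1\Bigl(\sum_{i=0}^{n-1} g'(\tilde y+th+i\alpha)\Bigr)dt .
\]
Thus the whole estimate reduces to a bound on the Birkhoff sums of $g'$ over $R_\alpha$ that is uniform in the base point.

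The key structural input is that $g\in C^1(\T)$ forces $\int_\T g'\,d\mu=g(1)-g(0)=0$, and that the irrational rotation $R_\alpha$ is uniquely ergodic. Hence, applying unique ergodicity to the continuous zero-mean function $g'$, the Birkhoff averages converge uniformly:
\[
\sup_{z\in\T}\Bigl|\frac1N\sum_{i=0}^{N-1} g'(z+i\alpha)\Bigr|\longrightarrow 0 \qquad (N\to\infty).
\]
So, given $\varepsilon>0$, I can pick $N_0$ with $\bigl|\sum_{i=0}^{N-1} g'(z+i\alpha)\bigr|\le \tfrac{\varepsilon}{2}N$ for all $z\in\T$ and all $N\ge N_0$. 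Writing $M:=\|g'\|_\infty$, I would then set $\delta:=\dfrac{\varepsilon}{2MN_0}$.

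With these choices the proof splits into three regimes, in each of which $|g^{(n)}(x)-g^{(n)}(y)|\le \tfrac{\varepsilon}{2}\max\{1,|n|\|x-y\|\}$, which gives the claimed strict inequality since $\max\{1,|n|\|x-y\|\}\ge 1$. For $n=0$ the left side is $0$. For $0<n<N_0$ the crude bound $|g^{(n)}(x)-g^{(n)}(y)|\le Mn|h|<MN_0\delta=\tfrac{\varepsilon}{2}$ works. For $n\ge N_0$, the displayed integral identity together with the uniform Birkhoff bound (applied for each fixed $t$, to $z=\tilde y+th\bmod 1$) gives $|g^{(n)}(x)-g^{(n)}(y)|\le |h|\cdot\tfrac{\varepsilon}{2}n=\tfrac{\varepsilon}{2}|n|\|x-y\|$. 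Together with the reduction to $n>0$, this proves the lemma.

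There is no substantial obstacle here. The only points requiring a little care are the bookkeeping between $\T$ and its universal cover — one must use genuine lifts so that $|h|=\|x-y\|$ and the integration path in the FTC step has the correct length — and invoking unique ergodicity of $R_\alpha$ in the precise form "uniform in the base point", which is exactly the content of unique ergodicity for a continuous observable. If one prefers to avoid citing unique ergodicity, the same uniform Birkhoff bound can be obtained by a direct Fourier argument: approximate $g'$ uniformly by a zero-mean trigonometric polynomial $P$ (e.g.\ a Fej\'er mean, which preserves the vanishing mean), bound $\bigl|\sum_{i=0}^{n-1}P(z+i\alpha)\bigr|$ by a constant $C_P$ via geometric sums $\bigl|\tfrac{e^{2\pi i k n\alpha}-1}{e^{2\pi i k\alpha}-1}\bigr|$, and split $\bigl|\sum g'(z+i\alpha)\bigr|\le n\|g'-P\|_\infty+C_P$.
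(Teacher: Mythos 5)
The paper states Lemma~\ref{c1} as a citation to [KaSo] and gives no proof of it here, so there is no in-paper argument to compare against; I can only judge your proposal on its own merits.

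Your proof is correct. The reduction of $n<0$ to $n>0$ by shifting the base point is valid since $R_\alpha$ is an isometry and $\|\cdot\|$ is translation-invariant. The fundamental-theorem-of-calculus identity converts the Birkhoff difference into $h\int_0^1\sum_{i=0}^{n-1}g'(\tilde y+th+i\alpha)\,dt$, and the key observation that $g\in C^1(\T)$ forces $\int_\T g'\,d\mu=0$ is exactly what makes the argument work: unique ergodicity of $R_\alpha$ then gives a uniform-in-base-point bound $\sup_z\bigl|\sum_{i=0}^{N-1}g'(z+i\alpha)\bigr|\le\tfrac{\varepsilon}{2}N$ for $N\ge N_0$. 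Your case split — crude bound $Mn|h|$ for $n<N_0$, Birkhoff bound for $n\ge N_0$ — lands in the right form $\tfrac{\varepsilon}{2}\max\{1,|n|\|x-y\|\}$, giving strict inequality. Two cosmetic points you may wish to record explicitly: take $\delta:=\min\{\tfrac{\varepsilon}{2MN_0},\tfrac12\}$ so that the lift condition $|h|=\|x-y\|$ is automatic, and handle the degenerate case $M=\|g'\|_\infty=0$ (where $g$ is constant and the statement is trivial) separately to avoid a vacuous division. The Fourier/Fej\'er alternative you sketch is also a valid route to the uniform Birkhoff bound; the unique-ergodicity version is cleaner and is the standard way this estimate is established, and I would expect it to match the argument in [KaSo].
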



Let $(q_n)_{n\in\N}$ stand for the sequence of the denominators of $\alpha$.
We will consider two different cases depending on the type of $\alpha$.

\subsection{$\alpha$ of unbounded type}


\begin{pr}\label{pr1}
Assume that $\alpha$ has unbounded type.
For every $\eta>0$ there exist sequences $\delta_n\to 0$ and $E_n\subset \T^f$ such that $\mu^f(\bigcup_{n}E_{k_n})=1$ for every increasing sequence $(k_n)$, $(x+\delta_n,s)\in \T^f$ whenever $(x,s)\in E_n$ and for every $n\in\N$ and $(x,s)\in E_n$ there exist $M_n > n$ and $N_n < -n$ such that the following holds:
\begin{enumerate}[(A)]
\item\label{A} for every $t\in[N_n,M_n]$ we have
$$d^f(T^f_t(x,s),T^f_t(x+\delta_n,s))<\eta;$$
\item\label{B} for every $t\in[M_n,(1+\frac{f_{\min}}{f_{\max}})M_n]$ we have%
\begin{align*}
&d^f(T^f_{t+A_f}(x,s),T^f_{t}(x+\delta_n,s))<\eta \quad \text{if} \quad A_f>0,\\
&d^f(T^f_{t}(x,s),T^f_{t-A_f}(x+\delta_n,s))<\eta \quad \text{if} \quad A_f<0,
\end{align*}
and for every $t\in[(1+\frac{f_{\min}}{f_{\max}})N_n,N_n]$ we have
\begin{align*}
&d^f(T^f_{t-A_f}(x,s),T^f_{t}(x+\delta_n,s))<\eta \quad \text{if} \quad A_f>0,\\
&d^f(T^f_{t}(x,s),T^f_{t+A_f}(x+\delta_n,s))<\eta \quad \text{if} \quad A_f<0.
\end{align*}
\end{enumerate}
\end{pr}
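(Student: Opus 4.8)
The plan is to exploit the special structure of the roof function $f(x) = A_f\{x\} + f_{ac}(x)$ together with the discreteness of the orbit of $\alpha$ to find, for a point $(x,s)$ and a nearby point $(x+\delta_n,s)$, a time $M_n$ at which the orbits have passed over the discontinuity $0\in\T$ comparable times, but with one of them having passed once more than the other. Concretely, I would set $\delta_n = \|q_{k_n}\alpha\|$ (up to sign) for a suitable subsequence $(q_{k_n})$ of denominators, chosen so that $a_{k_n+1}\to\infty$ — this is where the unbounded type hypothesis is used, since it guarantees $q_{k_n+1}/q_{k_n}\to\infty$, i.e.\ $\delta_n$ is extremely small relative to the number of steps $q_{k_n}$ over which we will iterate. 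The set $E_n$ will be (a large-measure subset of) the set of $(x,s)$ for which the orbit segment of length roughly $q_{k_n}$ iterates of $R_\alpha$ starting from $x$ hits a neighborhood of $0$ exactly once, with the hit time landing in a controlled window; by the three-distance/Denjoy-Koksma circle of ideas (the explicit partition by $I_n+k\alpha$, $I'_n+k\alpha$ recalled in the excerpt), the proportion of $x$ with this property is bounded below, and one can arrange $\mu^f(\bigcup_n E_{k_n}) = 1$ for any increasing $(k_n)$ by a Borel–Cantelli-type argument on the complement.

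The key computation is the following. Write $M_n$ as (roughly) $f^{(m_n)}(x)$ where $m_n \asymp q_{k_n}$. For $t \le M_n$, the Birkhoff sums $f^{(j)}(x)$ and $f^{(j)}(x+\delta_n)$ differ by $A_f\big(\{x\}^{(j)} - \{x+\delta_n\}^{(j)}\big) + \big(f_{ac}^{(j)}(x) - f_{ac}^{(j)}(x+\delta_n)\big)$. By Lemma~\ref{c1} the $f_{ac}$ contribution is $o(1)$ as long as $j\|\delta_n\| = o(1)$, which holds because $j \le q_{k_n} \ll q_{k_n+1} \approx 1/\delta_n$. The $A_f$ part is $A_f\delta_n\cdot j$ plus a correction of size $|A_f|$ each time exactly one of the two orbits (but not the other) jumps across $0$ — and by construction of $E_n$ this happens exactly once, near time $M_n$. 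Thus for $t \in [N_n, M_n]$ the two Birkhoff sums stay within $\eta$, giving (A) via the definition of $d^f$ (one chooses the branch of the $\min$ corresponding to a shift by one iterate of $T$ once $x$ has looped around). After the discontinuity is crossed by only one of the points, the accumulated gap is $A_f$ (mod $\eta$), and since $\delta_n j$ remains $o(1)$ on the extended window $[M_n, (1+\tfrac{f_{\min}}{f_{\max}})M_n]$ — here the factor $1+\tfrac{f_{\min}}{f_{\max}}$ ensures we stay below $q_{k_n+1}$ iterates — this gap persists, giving (B). The sign cases $A_f \gtrless 0$ correspond to whether $x+\delta_n$ crosses $0$ slightly before or after $x$; the direction of $\delta_n$ (i.e.\ the choice of sign of $\|q_{k_n}\alpha\|$) is fixed accordingly. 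The negative-time statements are symmetric, iterating $T^{-1}$.

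The main obstacle is controlling the "hit the discontinuity exactly once" event uniformly and simultaneously for a full-measure set: one must show that the set of $(x,s)$ whose forward orbit of length $\asymp q_{k_n}$ meets a small neighborhood $(-\delta_n, \delta_n)$ of $0$ precisely once — and with the hitting time in the right sub-window so that $M_n$ and the extended interval $[M_n,(1+\tfrac{f_{\min}}{f_{\max}})M_n]$ behave as required — has measure bounded below independently of $n$, and that failures are summable enough to conclude $\mu^f(\bigcup_n E_{k_n}) = 1$. This is handled by the exact description of the $q_n$-towers: among the $q_{k_n}$ intervals $I_{k_n}+k\alpha$ and $I'_{k_n}+k\alpha$, a definite fraction are long (length $\approx \|q_{k_n-1}\alpha\|$) and a definite fraction are short; a point $x$ in a long interval sees the neighborhood of $0$ crossed at a predictable step, and one restricts $E_n$ to such $x$ with the crossing step lying in the central portion of the tower. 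The remaining routine points — equivalence of $d^f$ with the product metric off a small set (stated in the excerpt), the bookkeeping of which branch of the $\min$ in $d^f$ is active, and absorbing the $f_{ac}$ and rounding errors into $\eta$ — I would dispatch without detailed computation.
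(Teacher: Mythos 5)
Your choice $\delta_n=\|q_{k_n}\alpha\|$ is an order of magnitude too small and breaks the construction. Since you select $(k_n)$ with $q_{k_n+1}/q_{k_n}\to\infty$, you have $\delta_n\asymp 1/q_{k_n+1}$, and the event that $[x,x+\delta_n]$ sweeps over the discontinuity $0$ within $m_n\asymp q_{k_n}$ iterates of $R_\alpha$ requires $R_\alpha^i(x)\in[-\delta_n,0]$ for some $0\le i\le m_n$; the set of such $x$ has measure $\lesssim m_n\delta_n\asymp q_{k_n}/q_{k_n+1}\to 0$. Hence $\mu^f(E_n)\to 0$, which is incompatible with the required conclusion $\mu^f(\bigcup_n E_{k_n})=1$ for \emph{every} increasing $(k_n)$: passing to a sparse enough subsequence makes $\sum_n\mu^f(E_{k_n})$ finite and hence $\mu^f(\bigcup_n E_{k_n})<1$. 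The correct scale is $\delta_n\asymp 1/q_{k_n}$ (the paper takes $\delta_n=\eta^2/q_{l_n}$), not $\asymp 1/q_{k_n+1}$: then the set of $x$ whose interval hits $0$ within $\le q_{l_n}$ steps has measure comparable to $\eta^2$, bounded below in $n$, while the linear drift $A_f j\delta_n$ over $j\le q_{l_n}$ iterates is $\lesssim A_f\eta^2<\eta$. The drift only needs to be $<\eta$, not $o(1)$; making $\delta_n$ smaller destroys the measure lower bound for $E_n$, which is the whole point.

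There is a second gap in dismissing the negative-time part of the statement as ``symmetric, iterating $T^{-1}$.'' The proposition demands that the \emph{same} pair $(x,s),(x+\delta_n,s)$ split by $A_f$ both forward and backward, i.e.\ $[x,x+\delta_n]$ must cross $0$ both at some $i_n>0$ and at $i_n-q_{l_n}<0$; this double hit is exactly where unbounded type enters, and is why the bounded-type analogue, Proposition~\ref{pr3}, only delivers the forward part. The mechanism in the paper, recorded in \eqref{visits}, is that with $\delta_n\asymp 1/q_{l_n}\gg\|q_{l_n}\alpha\|$ and $x\in I_n=[-\eta^3/q_{l_n},-2/q_{l_n+1}]$, the four points $0,\pm q_{l_n}\alpha,2q_{l_n}\alpha$ all lie in $(x,x+\delta_n)$, so the crossings of $0$ by $R_\alpha^j[x,x+\delta_n]$ are spaced by exactly $q_{l_n}$ and occur automatically both before and after $i_n$. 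With your $\delta_n\asymp\|q_{k_n}\alpha\|$ this does not happen. Lastly, the covering $\mu^f(\bigcup_n E_{k_n})=1$ is not quite Borel--Cantelli: the base sets are neither independent nor nested, and the paper uses Lyons' strong law for weakly correlated events (Lemma~\ref{wLLN}), verifying the decorrelation estimate by unique ergodicity of $R_\alpha$ in Lemma~\ref{indep}.
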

\noindent
The reader should have the following picture in mind.
Both forward and backward images of two `good' points under the flow $\cT$ stay close until they are split by the discontinuity, and once they are split they stay at distance $A_f$ (in the flow direction) for some time.
The fact that $\alpha$ has unbounded type is used in \eqref{B} to guarantee that when the  the interval $[x,x+\delta_n]$ ($\delta_n$ of order $q_n^{-1}$) hits $0$ after $0\leq i_n < q_n$ iterates, then it will also hit $0$ in time $i_n-q_n<0$.

\begin{proof}
Recall that $f=A_f\{\cdot\}+f_{ac}$, where $f_{ac}\in C^1(\T)$.
Let us assume without loss of generality that $A_f>0$, the proof in the other case is analogous.
Fix $\eta>0$, $\eta< \min\{ \frac{1}{50}, A_f^{-2}, f_{\min}\}$, $4\eta^{3/2}<f_{\min}$.
Let $\delta>0$ be such that for every $x,y\in \T$, $\|x-y\|< \delta$, and every $k\in \Z$
\begin{equation}\label{c1part}
|f_{ac}^{(k)}(x)-f_{ac}^{(k)}(y)|<\eta^{3/2}\max\{ 1, |k| \|x-y\| \}
\end{equation}
(see Lemma~\ref{c1}).
By the assumption on $\alpha$, there exists a sequence $(l_n)$ such that $\frac{q_{l_n+1}}{q_{l_n}}\to +\infty$.
We will assume (by replacing $(l_n)$ with its subsequence if necessary) that $q_{l_1} > \frac{\eta^{2}}{\delta}$, $l_n>nf_{\min}^{-1}$, $q_{l_n}>4l_n$ and $\frac{q_{l_n+1}}{q_{l_n}}> \frac{4}{\eta^{3}}$.
Set $\delta_n:=\frac{\eta^2}{q_{l_n}}$, $\delta_n < \delta$ for all $n$.
Let $I_n:=[-\frac{\eta^3}{q_{l_n}},-\frac{2}{q_{l_n+1}}]\subset\T$.
Notice that by \eqref{cf1} and \eqref{cf2}, for every $x\in I_n$,
\begin{equation}\label{visits}
\{i\in\{-q_{l_n},\ldots,2q_{l_n}\} \,:\, i\alpha\in(x,x+\delta_n) \} = \{-q_{l_n},0,q_{l_n},2q_{l_n}\}.
\end{equation}
Set $B_n:=\bigsqcup_{i=-l_n}^{-q_{l_n}+l_n}R^i_\alpha I_n\subset\T$ and let
\begin{equation}\label{aen}
E_n:=\{(x,s)\in \T^f\,:\, x\in B_n \text { and } (x+\delta_n,s)\in \T^f\} \subset\T^f.
\end{equation}
We show in Lemma \ref{indep} below that $\mu^f(\bigcup_n E_{k_n})=1$ for every increasing sequence $(k_n)$.

Take any $(x,s)\in E_n$.
Let $i_n\in [l_n, q_{l_n}-l_n]$ be the unique integer such that $0\in R_\alpha^{i_n} [x, x+\delta_n]$, $0\in R_\alpha^{i_n-q_{l_n}} [x, x+\delta_n]$ and
\begin{equation}\label{nojump}
0\notin R_\alpha^{k} [x, x+\delta_n]
\end{equation}
for all $i_n-2q_{l_n}< k<i_n+q_{l_n}$, $k\neq i_n, i_n-q_{l_n}$.
We have then for every $k\in(i_n-q_{l_n},i_n]$, 
\begin{multline}\label{est}
|f^{(k)}(x)-f^{(k)}(x+\delta_n)|
\leq |f_{ac}^{(k)}(x)-f_{ac}^{(k)}(x+\delta_n)| + A_f|k|\delta_n\\
\leq \eta^{3/2}\max\{1,|k|\delta_n\} + A_f|k|\delta_n
= \eta^{3/2} + A_f|k|\delta_n
< 2\eta^{3/2}.
\end{multline}
Let $M_n:=f^{(i_n+1)}(x+\delta_n)-s$ and $N_n:=f^{(i_n-q_{l_n})}(x+\delta_n)-s$.
Then for every $t\in [N_n,M_n]$,
$$
d^f(T_t^f(x,s),T_t^f(x+\delta_n,s)) < \delta_n +2\eta^{3/2}<\eta.
$$
Since $l_n\leq i_n\leq q_{l_n}-l_n$, we have $M_n\geq l_nf_{\min}>n$, $N_n \leq -l_nf_{\min}<-n$.
This proves \eqref{A}.
Let us prove \eqref{B} for $t\in [M_n,(1+\frac{f_{\min}}{f_{\max}})M_n]$,
the proof for $t\in[(1+\frac{f_{\min}}{f_{\max}})N_n,N_n]$ is analogous.
By the cocycle equality we can write for every $k\in(i_n,i_n+q_{l_n})$ and $y=x,x+\delta_n$:
$$
f^{(k)}(y) = f^{(i_n)}(y) + f(R_\alpha^{i_n}(y)) + f^{(k-i_n-1)}(R_\alpha^{i_n+1}(y)).
$$
Since $0\in R_\alpha^{i_n} [x, x+\delta_n]$,
\begin{equation}\label{est2}
|f(R_\alpha^{i_n}(x))-f(R_\alpha^{i_n}(x+\delta_n))-A_f| < \eta^{3/2} + A_f\delta_n < 2\eta^{3/2}.
\end{equation}
By \eqref{nojump} we also know that for every $k\in(i_n,i_n+q_{l_n})$
\begin{equation}\label{est3}
|f^{(k-i_n-1)}(R_\alpha^{i_n+1}(x)) - f^{(k-i_n-1)}(R_\alpha^{i_n+1}(x+\delta_n))| < 2\eta^{3/2}.
\end{equation}
Combining \eqref{est}, \eqref{est2} and \eqref{est3}, we get
$$
|f^{(k)}(x)-f^{(k)}(x+\delta_n)-A_f|< 6\eta^{3/2}
$$
for every $k\in (i_n, i_n+q_{l_n})$.
Therefore for every $t\in [M_n, f^{(i_n+q_{l_n}+1)}(x+\delta_n)-s]$ we have
$$
d^f(T^f_{t+A_f}(x,s),T_{t}^f(x+\delta_n,s)) < \delta_n + 6\eta^{3/2} < \eta.
$$
Since $M_n\leq q_{l_n}f_{\max}$, while $f^{(i_n+q_{l_n}+1)}(x+\delta_n)-s - M_n = f^{(q_n)}(R_\alpha^{i_n+1}(x+\delta_n))\geq q_{l_n}f_{\min}$,
this finishes the proof of \eqref{B}.
\end{proof}

To prove Lemma~\ref{indep} we will use the following strong law of large numbers for weakly correlated random variables:
\begin{lm}[\cite{Ly88}] \label{wLLN}
Let $(X,\mu)$ be a probability space.
Given a sequence of measurable subsets $B_n\subset X$, if
\begin{equation} \label{wc}
|\mu(B_n\cap B_m) - \mu(B_n) \mu(B_m)| \leqslant \varepsilon(|n-m|),
\end{equation}
where $\sum_{k\in\N} \frac{\varepsilon(k)}{k} < \infty$, then
$\frac{1}{n} \sum_{i=1}^n(\1_{B_i} - \mu(B_i))\to 0$ $\mu$-almost everywhere.
In particular, if\, $\inf_n\mu(B_n)>0$, then $\mu(\limsup_n B_n)=1$.
\end{lm}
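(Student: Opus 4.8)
The plan is to prove the a.e.\ convergence $\tfrac1n\sum_{i=1}^n(\1_{B_i}-\mu(B_i))\to 0$; the ``in particular'' clause will then follow at once. Write $X_i:=\1_{B_i}-\mu(B_i)$, $S_n:=\sum_{i=1}^nX_i$, and let $\mathbb E$ denote integration against $\mu$. These variables are centered and, by \eqref{wc}, satisfy $\bigl|\mathbb E[X_iX_j]\bigr|=\bigl|\mu(B_i\cap B_j)-\mu(B_i)\mu(B_j)\bigr|\le\varepsilon(|i-j|)$. The starting point is the increment estimate obtained by summing this bound along the diagonals of $\{m+1,\dots,n\}^2$:
\[
\mathbb E\bigl[(S_n-S_m)^2\bigr]\ \le\ \sum_{i,j=m+1}^n\varepsilon(|i-j|)\ \le\ 2\,(n-m)\,W(n-m),\qquad W(N):=\sum_{l=0}^{N}\varepsilon(l).
\]
The hypothesis $\sum_{k\ge1}\varepsilon(k)/k<\infty$ will enter only through its consequence $\sum_{j\ge 0}2^{-j}W(2^j)<\infty$, which one gets by interchanging the order of summation (for $l\ge1$ the term $\varepsilon(l)$ is then weighted by $\sum_{j:\,2^j\ge l}2^{-j}$, a quantity of order $1/l$).

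First I would pass to the dyadic subsequence $n=2^k$. The increment estimate gives $\mathbb E[(S_{2^k}/2^k)^2]\le 2\cdot 2^{-k}W(2^k)$, which is summable in $k$, so $\sum_k(S_{2^k}/2^k)^2<\infty$ $\mu$-a.e.\ by Tonelli, and in particular $S_{2^k}/2^k\to 0$ $\mu$-a.e.

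Next I would control the fluctuations between consecutive dyadic times. For $2^k\le n<2^{k+1}$ one can write $(2^k,n]$ as a disjoint union of at most $k$ dyadic blocks of pairwise distinct lengths (read off the binary expansion of $n-2^k$), so that $|S_n-S_{2^k}|\le\sum_{j=0}^{k-1}M_{k,j}$, where $M_{k,j}:=\max\{\,|\textstyle\sum_{i\in I}X_i|\,\}$ over the $2^{k-j}$ disjoint dyadic blocks $I$ of length $2^j$ inside $(2^k,2^{k+1}]$. The increment estimate yields $\mathbb E[M_{k,j}^2]\le\sum_I\mathbb E\bigl[(\sum_{i\in I}X_i)^2\bigr]\le 2^{k-j}\cdot 2\cdot 2^jW(2^j)=2^{k+1}W(2^j)$. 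A \emph{weighted} Cauchy--Schwarz inequality, splitting $\sqrt{W(2^j)}=2^{j/4}\cdot\bigl(2^{-j/4}\sqrt{W(2^j)}\bigr)$, then gives $\mathbb E\bigl[(\sum_{j<k}M_{k,j})^2\bigr]\le C\,2^{3k/2}\sum_{j<k}2^{-j/2}W(2^j)$; dividing by $4^k$, summing over $k$, and interchanging the order of summation once more bounds the total by a constant times $\sum_j 2^{-j}W(2^j)<\infty$. Hence $2^{-k}\max_{2^k\le n<2^{k+1}}|S_n-S_{2^k}|\to 0$ $\mu$-a.e., and combined with the previous paragraph this gives, for $2^k\le n<2^{k+1}$, $|S_n|/n\le |S_{2^k}|/2^k+2^{-k}\max_{2^k\le m<2^{k+1}}|S_m-S_{2^k}|\to 0$ $\mu$-a.e. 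Finally, if $c:=\inf_n\mu(B_n)>0$ then $\tfrac1n\sum_{i=1}^n\mu(B_i)\ge c$, so for $\mu$-a.e.\ $x$ one has $\liminf_n\tfrac1n\sum_{i=1}^n\1_{B_i}(x)\ge c>0$, whence $\1_{B_i}(x)=1$ for infinitely many $i$; thus $\mu(\limsup_n B_n)=1$.

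The hard part is the fluctuation step, where one must extract the full strength of the \emph{sharp} assumption $\sum_k\varepsilon(k)/k<\infty$ rather than the more comfortable $\sum_k\varepsilon(k)<\infty$. That is exactly what forces both the dyadic-block decomposition and the weighted (rather than plain) Cauchy--Schwarz estimate: a plain Cauchy--Schwarz would only bound the relevant series by a constant times $\sum_j (j+1)\,2^{-j}W(2^j)$, which can diverge (e.g.\ when $\varepsilon(l)$ is of order $(\log l)^{-2}$). The remaining ingredients --- the one-line increment estimate, the two summation interchanges, and the passage from $L^2$ bounds along dyadic times to almost-sure statements via Tonelli --- are routine.
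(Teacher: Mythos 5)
The paper does not prove this lemma: it is quoted from Lyons \cite{Ly88} and used as a black box, so there is no in-paper argument to compare against. Your self-contained proof is correct, and it follows the standard maximal-inequality (Rademacher--Menshov / M\'oricz) template. The individual steps all check out: the increment estimate $\mathbb E[(S_n-S_m)^2]\le 2(n-m)W(n-m)$ by summing $\varepsilon(|i-j|)$ along diagonals; the reduction of $\sum_k\varepsilon(k)/k<\infty$ to $\sum_j 2^{-j}W(2^j)<\infty$ by a Fubini interchange; $L^2$ summability along $n=2^k$; the binary-expansion decomposition of $(2^k,n]$ into at most $k$ dyadic blocks of distinct lengths $2^j$, each aligned with the grid because the running offset $\sum_{j'>j}2^{j'}$ is a multiple of $2^{j+1}$; the bound $\mathbb E[M_{k,j}^2]\le 2^{k+1}W(2^j)$; and the weighted Cauchy--Schwarz with weights $2^{j/2}$ leading, after dividing by $4^k$, summing in $k$, and interchanging once more, to a constant times $\sum_j 2^{-j}W(2^j)$. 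You also correctly identify the crux: a plain Cauchy--Schwarz over the $\le k$ blocks inserts an extra factor of $k$ and the resulting series $\sum_j (j+1)2^{-j}W(2^j)$ can diverge under the stated hypothesis (e.g.\ $\varepsilon(l)\asymp(\log l)^{-2}$), so the weighting is essential to exploit the full strength of $\sum_k\varepsilon(k)/k<\infty$. The deduction of the ``in particular'' clause from $\liminf_n\frac1n\sum_{i\le n}\1_{B_i}\ge\inf_n\mu(B_n)>0$ a.e.\ is also fine. Lyons' own paper obtains the result from a maximal inequality of M\'oricz type and phrases things in terms of weakly stationary sequences; your argument essentially re-derives the needed maximal inequality directly in the special case at hand, which is a more elementary and self-contained route to the same conclusion.
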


\begin{lm}\label{indep}
Let $(E_n)$ be as in \eqref{aen}.
Then for every increasing sequence $(k_n)$ we have
$$
\mu^f(\bigcup_n E_{k_n})=1.
$$
\end{lm}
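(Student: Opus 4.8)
The goal is to show that for any increasing sequence $(k_n)$, the sets $E_{k_n}$ cover $\T^f$ up to a null set. I would first reduce this to a statement purely about the base $\T$: since $E_n = \{(x,s)\in\T^f : x\in B_n \text{ and } (x+\delta_n,s)\in\T^f\}$, and $\delta_n\to 0$ while $f$ has only finitely many (here, one) discontinuities with $f$ bounded below, the "fiber condition" $(x+\delta_n,s)\in\T^f$ excludes only a set of $\mu^f$-measure tending to $0$; more precisely, for $x$ outside an $O(\delta_n)$-neighbourhood of $0$, the full fiber over $x$ is included provided $\mu$-a.e.\ point of $B_n$ contributes almost its whole fiber. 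So it suffices to prove that $\mu(\limsup_n B_{k_n}) = 1$, and then a Borel–Cantelli/Fubini argument upgrades this to $\mu^f(\bigcup_n E_{k_n})=1$. Actually, since each $E_{k_n}$ already contains "most" of the fiber over $B_{k_n}$ and $\bigcup_n E_{k_n} \supseteq \bigcup_n (\text{most of fiber over } B_{k_n})$, it is cleanest to show directly that $\mu(\bigcup_n B_{k_n}) = 1$, equivalently $\mu(\limsup_n B_{k_n})=1$, and handle the fiber loss at the end.

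The main tool is Lemma~\ref{wLLN}: I would verify that the sequence $(B_{k_n})$ (or rather, a suitable re-indexing) satisfies the weak-correlation hypothesis \eqref{wc} with a summable-over-$k$ error, and has $\inf_n \mu(B_{k_n})>0$. For the measure lower bound: $B_n = \bigsqcup_{i=-l_n}^{-q_{l_n}+l_n} R_\alpha^i I_n$ is a disjoint union of $q_{l_n}-2l_n+1$ translates of $I_n$, and $|I_n| = \frac{\eta^3}{q_{l_n}} - \frac{2}{q_{l_n+1}}$, which is comparable to $\eta^3/q_{l_n}$ since $q_{l_n+1}/q_{l_n}\to\infty$; hence $\mu(B_n)$ is bounded below by a constant of order $\eta^3$ (using $q_{l_n}>4l_n$), uniformly in $n$. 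The disjointness of the translates follows from \eqref{cf2}: the points $0,\alpha,\dots,(q_{l_n}-1)\alpha$ are $\frac{1}{2q_{l_n}}$-separated and $|I_n|<\frac{1}{q_{l_n}}$... one has to be a touch careful that the range of indices $-l_n$ to $-q_{l_n}+l_n$ together with the length of $I_n$ keeps the translates inside one "period" and non-overlapping, which is exactly what \eqref{visits} and the continued-fraction combinatorics of the partition $\{I_n+k\alpha\}\cup\{I'_n+k\alpha\}$ give.

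For the correlation estimate \eqref{wc}: $B_n$ and $B_m$ are both finite unions of rotated intervals, $B_n$ being a union of translates of the interval $I_n$ of length $\approx \eta^3/q_{l_n}$, and likewise for $B_m$. Writing $\1_{B_n} = \sum_i \1_{I_n + i\alpha}$ (indices in the stated range) and similarly for $B_m$, one expands $\mu(B_n\cap B_m) = \sum_{i,j}\mu\big((I_n+i\alpha)\cap(I_m+j\alpha)\big)$. Each term is the length of an intersection of two short intervals, and by equidistribution of $\{(i-j)\alpha\}$ — quantitatively, via three-distance/Denjoy–Koksma-type bounds controlled by the denominators $q_{l_n}, q_{l_m}$ — the average of these over $i,j$ is close to $\mu(B_n)\mu(B_m)$ with an error that decays as $\min(q_{l_n},q_{l_m})$ grows, i.e.\ as $|n-m|\to\infty$ once we have arranged (by the standing assumption $q_{l_1}>\eta^2/\delta$ and passing to a subsequence making $q_{l_n}$ grow fast, e.g.\ geometrically) that $q_{l_n}$ grows at least geometrically in $n$. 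That makes $\varepsilon(|n-m|)$ summable against $1/k$ — indeed it can be made geometrically small. The one genuine subtlety, which I expect to be the main obstacle, is getting an honest quantitative bound on $|\mu(B_n\cap B_m)-\mu(B_n)\mu(B_m)|$: the intervals are not randomly placed, so one must use the arithmetic of $\alpha$ (the three-gap theorem, or bounding discrepancy of $\{k\alpha\}$ by $\sum 1/q_j$) to show that the $q_{l_m}$ translates of $I_m$ are spread out finely enough relative to the scale $|I_n|$ that each copy of $I_n$ captures a proportion of $B_m$ close to $\mu(B_m)$. Once \eqref{wc} is in hand with summable error and $\inf_n\mu(B_{k_n})>0$ (a subsequence of a sequence with a uniform lower bound still has the uniform lower bound), Lemma~\ref{wLLN} gives $\mu(\limsup_n B_{k_n})=1$, hence $\mu(\bigcup_n B_{k_n})=1$; combined with the fiber-loss remark (the excluded set over each $B_n$ has measure $O(\delta_n)\to 0$, but since we take a union over all $n$ we only need one good $n$ per point, and a.e.\ $x\in\limsup B_{k_n}$ lies in $B_{k_n}$ for infinitely many $n$, hence in $E_{k_n}$ for some $n$ with $\delta_n$ as small as we like) we conclude $\mu^f(\bigcup_n E_{k_n})=1$.
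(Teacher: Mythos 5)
Your overall plan --- reduce to the base by handling the fiber loss, show $\inf_n\mu(B_{k_n})>0$, verify the weak-correlation hypothesis \eqref{wc}, and invoke Lemma~\ref{wLLN} --- is exactly the framework of the paper, and your treatment of the fiber loss and of the uniform lower bound $\mu(B_n)>\eta^3/4$ is essentially what the paper does. Where you diverge, and where you yourself flag ``the main obstacle'', is in verifying \eqref{wc}: you propose an honest quantitative bound on $|\mu(B_n\cap B_m)-\mu(B_n)\mu(B_m)|$ via discrepancy / three-gap / Denjoy--Koksma estimates. That route can in fact be pushed through (a Denjoy--Koksma bound over the $q_{l_n}$-point orbit gives, in the paper's notation, $|s_m(n)-p(n)\mu(B_m)|=O(p(m)+l_n)$, and the at-least-exponential growth of $(q_j)$ then makes the resulting $\varepsilon(|n-m|)$ geometrically small), but you leave the estimate open and it is more delicate than needed. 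The paper instead makes a soft observation that you do not exploit: since $\bigcup_n E_{k'_n}\subset\bigcup_n E_{k_n}$ for any further subsequence $(k'_n)$ of $(k_n)$, it is enough to prove the lemma for a subsequence. One then refines $(k_n)$ so fast that for each $m<n$ the Birkhoff average $s_{k_m}(k_n)/p(k_n)$ lies within $2^{-n}$ of $\mu(B_{k_m})$ --- this is just unique ergodicity of $R_\alpha$ applied to the fixed indicator $\1_{B_{k_m}}$, so no quantitative equidistribution rate is required --- and also $p(k_m)/p(k_n)<2^{-n}$; a two-line computation then gives $\varepsilon(n)=3\cdot 2^{-n}$ in \eqref{wc}. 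In short, the two proofs share the same skeleton, but the paper disposes of precisely the step you correctly identified as the crux by passing to a subsequence and using qualitative unique ergodicity, rather than by the quantitative discrepancy bounds you were reaching for.
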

\begin{proof}
Recall that $B_n=\bigsqcup_{i=-l_n}^{-q_{l_n}+l_n}R^i_\alpha I_n$, where $I_n=[-\frac{\eta^3}{q_{l_n}},-\frac{2}{q_{l_n+1}}]\subset\T$, and $E_n=\{(x,s)\in \T^f\,:\, x\in B_n \text { and } (x+\delta_n,s)\in \T^f\} \subset\T^f$.
The reader can easily check that the sets $R^i_\alpha I_n$ are pairwise disjoint.
Let $B^f_n := \{ (x,s)\in\T^f \mid x\in B_n \}$ and $X_n := \{ (x,s)\in\T^f \mid (x+\delta_n,s)\in\T^f \}$, so that $E_n = B^f_n \cap X_n$.
Obviously, $X_1\subset X_2\subset\cdots$ and $\mu^f(X_n)\to 1$, so it is enough to prove that $\mu(\bigcup_n B_{k_n})=1$ for every increasing sequence $(k_n)$.
For this we will apply Lemma~\ref{wLLN}.

Denote $p(n) := q_{l_n}-2l_n+1$ and $s_{m}(n):=|\{i\in[-q_{l_n}+l_n,-l_n] \::\: i\alpha \in B_m\}|$, $m<n$.
Let $(k_n)_{n\in\N}$ be an increasing sequence.
By passing to a further subsequence if necessary we may assume without loss of generality that $$\left|\frac{s_{k_m}(k_n)}{p(k_n)}-\mu(B_{k_m})\right|<\frac{1}{2^n} \quad \text{and} \quad \frac{p(k_m)}{p(k_n)}<\frac{1}{2^n}$$ for every $m<n$
(the first inequality holds for every $k_m$ ($m<n$) fixed and $k_n$ large enough by unique ergodicity of $R_\alpha$).
It is a routine to verify that
$$
\frac{s_{k_m}(k_n)-2p(k_m)}{p(k_n)}\mu(B_{k_n})
\leq \mu(B_{k_n}\cap B_{k_m})
\leq \frac{s_{k_m}(k_n)+2p(k_m)}{p(k_n)}\mu(B_{k_n})
$$
and hence
$$|\mu(B_{k_n}\cap B_{k_m}) - \mu(B_{k_n})\mu(B_{k_m})|\leq \left|\frac{s_{k_m}(k_n)}{p(k_n)}-\mu(B_{k_m})\right|+2 \frac{p(k_m)}{p(k_n)}<\frac{3}{2^n}.$$
Therefore, \eqref{wc} holds for the sequence $(B_{k_n})_{n\in\N}$ with $\varepsilon(n)=\frac{3}{2^n}$.
Finally, observe that $\mu(B_n)=p(n) \mu(I_n) =(q_{l_n}-2l_n+1)(\frac{\eta^3}{q_{l_n}}-\frac{2}{q_{l_n+1}}) > \frac{q_{l_n}}{2} \cdot \frac{\eta^3}{2q_{l_n}} =  \frac{\eta^3}{4}$ for every $n$ by the assumptions on $(l_n)$.
It follows from Lemma~\ref{wLLN} that $\mu(\limsup_n B_{k_n})=1$.
\end{proof}

\begin{pr}\label{pr2}
Let $\alpha$ be any irrational.
For every $\xi>0$ small enough and $M>0$ there exists a set $Z\subset \T^f$, $\mu^f(Z)>1-\xi$, and $\theta_{\xi,M}>0$ such that for every $(x,s),(y,r)\in Z$, $d^f((x,s),(y,r))<\theta_{\xi,M}$, $x\neq y$, there exists $L > M$ such that at least one of the following holds:
\begin{enumerate}[(a)]
\item\label{prop2a}
for every $t\in[0,L]$ we have
$$
d^f(T^f_t(x,s),T^f_t(y,r))<2\xi,
$$
and for every $t\in [L,(1+\frac{f_{\min}}{10f_{\max}})L]$ we have
\begin{align*}
&d^f(T^f_t(x,s),T^f_{t+\xi}(y,r))<\xi/2 \quad \text{if} \quad A_f>0,\\
&d^f(T^f_t(x,s),T^f_{t-\xi}(y,r))<\xi/2 \quad \text{if} \quad A_f<0;
\end{align*}

\item\label{prop2b}
for every $t\in[-L,0]$ we have
$$
d^f(T^f_t(x,s),T^f_t(y,r))<2\xi,
$$
and for every $t\in [-(1+\frac{f_{\min}}{10f_{\max}})L,-L]$ we have
\begin{align*}
&d^f(T^f_t(x,s),T^f_{t-\xi}(y,r))<\xi/2 \quad \text{if} \quad A_f>0,\\
&d^f(T^f_t(x,s),T^f_{t+\xi}(y,r))<\xi/2 \quad \text{if} \quad A_f<0;
\end{align*}

\item\label{prop2c}
for every $t\in[0,L]$ we have
$$
d^f(T^f_t(x,s),T^f_t(y,r))<15\xi,
$$
and for every $t\in [L, (1+\frac{f_{\min}}{f_{\max}})L]$ we have
\begin{align*}
&d^f(T^f_{t+A_f}(x,s),T^f_{t}(y,r))<30\xi \quad \text{if} \quad A_f>0,\\
&d^f(T^f_t(x,s),T^f_{t-A_f}(y,r))<30\xi \quad \text{if} \quad A_f<0.
\end{align*}
\end{enumerate}
\end{pr}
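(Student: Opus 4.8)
The whole proposition reduces to controlling a single scalar that measures how the two orbits drift apart \emph{in the flow direction}, and then reading off which of \ref{prop2a}, \ref{prop2b}, \ref{prop2c} occurs from the continued fraction of $\alpha$ at the scale $\|x-y\|$. Throughout assume $A_f>0$; the case $A_f<0$ is completely symmetric and, as will be clear below, produces exactly the sign changes $\pm\xi$, $\pm A_f$ written in the statement. Fix $\xi$ small (say $\xi<\tfrac1{100}\min\{f_{\min},A_f\}$) and $M>0$.

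\emph{Setup and reduction.} Since $d^f$ agrees with the product metric off a set of arbitrarily small measure, Luzin's and Egorov's theorems give a set $Z\subset\T^f$ with $\mu^f(Z)>1-\xi$ on which $d^f$ is uniformly comparable to the product metric and on which the modulus in Lemma~\ref{c1} applied to $f_{ac}$ is uniform. Then choose $\theta_{\xi,M}$ so small that: (i) if $(x,s),(y,r)\in Z$ and $d^f((x,s),(y,r))<\theta_{\xi,M}$ then $\delta:=\|x-y\|$ and $|s-r|$ are as small as we please; (ii) $|f_{ac}^{(n)}(x)-f_{ac}^{(n)}(y)|<\xi^{2}\max\{1,|n|\delta\}$ for all $n$; and (iii) every timescale appearing below exceeds $M$ — this is automatic because all the relevant timescales are $\gtrsim 1/\delta$. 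After interchanging the two points if necessary, normalize so that $y=x+\delta$ on the circle.

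\emph{The divergence cocycle.} For $n\in\Z$ set $g_n:=(s-r)-\bigl(f^{(n)}(x)-f^{(n)}(y)\bigr)$. By \eqref{spflow}, as long as both orbits have made the same number $n$ of base steps, the fibre coordinates of $T^f_t(x,s)$ and $T^f_t(y,r)$ differ exactly by $g_n$ while the base points differ by $\delta$; together with the fact that $d^f$ wraps across the roof this makes $d^f(T^f_t(x,s),T^f_t(y,r))$ of size $\approx\delta+|g_n|$, and shifting one of the orbits by $\pm\xi$ (resp. $\pm A_f$) replaces $g_n$ by $g_n\mp\xi$ (resp. $g_n\mp A_f$) modulo the roof identification. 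Using $f=A_f\{\cdot\}+f_{ac}$ and the real‑line identity $\{u\}-\{u+\delta\}=-\delta+\1[\{u\}>1-\delta]$ one gets
$$
g_n=(s-r)+A_f\bigl(n\delta-N_n\bigr)+O\!\bigl(\xi^{2}\max\{1,|n|\delta\}\bigr),\qquad N_n:=\#\{0\le k<n:\ R_\alpha^{k}[x,y]\ni 0\}.
$$
Thus, as $n$ runs over $\Z$, $g_n$ rises affinely at slope $A_f\delta$ and drops by $A_f+O(\xi^{2})$ at each step at which the moving interval $R_\alpha^{k}[x,y]$ hits $0$; between two consecutive hits $g_n$ is affine. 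Let $n^{\ast}:=\lceil\xi/(A_f\delta)\rceil$ be the number of steps needed for the slow (roof) divergence alone to reach $\xi$, and let $i^{+}\ge1$, $i^{-}\le-1$ be the first forward and first backward hit of $0$.

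\emph{The case split and the main obstacle.} If $i^{+}>2n^{\ast}$, the forward orbits stay $2\xi$‑close up to flow time $L:=f^{(n^{\ast})}(y)-r$, where $g\approx\xi$; since no hit occurs before step $\approx1.1\,n^{\ast}<i^{+}$, $g$ stays within $0.1\xi$ of $\xi$ (hence the shift by $\xi$ matches the orbits up to $\xi/2$) throughout $[L,(1+\tfrac{f_{\min}}{10f_{\max}})L]$ — this is \ref{prop2a}; the mirror condition $|i^{-}|>2n^{\ast}$ gives \ref{prop2b}. Otherwise both first hits are $\le2n^{\ast}$; taking (say) $i^{+}\le|i^{-}|$ and $L:=f^{(i^{+})}(y)-r$, on $[0,L]$ one has $|g_n|\le2n^{\ast}A_f\delta+O(\xi^{2})<15\xi$, the hit at $i^{+}$ drops $g$ by $A_f$, so the split becomes $-A_f+O(\xi)$, and \emph{provided no further hit of $0$ occurs before roughly step $2i^{+}$} it stays within $30\xi$ of $-A_f$ on $[L,(1+\tfrac{f_{\min}}{f_{\max}})L]$ — this is \ref{prop2c}. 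The delicate point — where the structure of $\alpha$ enters, and the technically heaviest part of the argument — is this last ``no further hit'' requirement, and dually making the three cases exhaustive: one must know that after the relevant hit of $0$ there is a hit‑free stretch long enough to cover $[L,(1+\tfrac{f_{\min}}{f_{\max}})L]$. The hits of $0$ by $R_\alpha^{k}[x,y]$ are exactly the visits of the orbit $(k\alpha)$ to a fixed arc of length $\delta$, so this is a statement about hitting and return times of $R_\alpha$ to short arcs: choosing $m$ with $\|q_{m+1}\alpha\|\le\delta<\|q_m\alpha\|$, the separation bound \eqref{cf2} shows consecutive visits are at least $q_{m+1}$ apart, the first visit occurs before $q_{m+2}$, and the gaps take at most three values (three‑distance theorem). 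Combining these with $n^{\ast}\asymp\xi/(A_f\delta)$ and $\delta q_{m+1}\gtrsim 1/(a_{m+2}+1)$ disposes of all cases once $\xi$ is small, except when $\alpha$ has an exceptionally large partial quotient $a_{m+2}$ at the scale $\sim1/\delta$; there $0$ can be hit very frequently and one must instead use the Ostrowski/Sturmian pattern of the sequence of return gaps to locate a hit that is followed by one of the long gaps, and take $L$ at that hit. Once the correct $L$ is pinned down in each regime, the displacement bounds in \ref{prop2a}, \ref{prop2b}, \ref{prop2c} follow by plugging the sawtooth description of $g_n$ into the definition of $d^f$ and using that $d^f$ equals the product metric on $Z$.
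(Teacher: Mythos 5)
Your sawtooth picture of the flow‑direction divergence $g_n$, and the split into ``no forward hit / no backward hit / early hit followed by a drop of size $A_f$'' is exactly the mechanism the paper uses, so the conceptual core is right. But there are two genuine gaps, and you correctly flag the first yourself without closing it. The case split by $2n^\ast$ is not, by itself, exhaustive in the needed sense: when both first hits $i^+,|i^-|\le 2n^\ast$, you need the stretch $(i^+,2i^+]$ to be hit‑free, which by the separation bound~\eqref{cf2} requires $i^+<q_{m+1}$; but $2n^\ast\asymp\xi/(A_f\delta)$ can be as large as $\sim\xi q_{m+2}/A_f\gg q_{m+1}$ when $a_{m+2}$ is large, so $i^+$ could exceed $q_{m+1}$ and the hit‑free window could be too short. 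The paper avoids this entirely with Lemma~\ref{lin} (quoted from \cite{KaSo}): at scale $\|x-y\|<\tfrac1{6q_n}$, either the forward orbit of $[x,y]$ misses $0$ for $[q_{n+1}/6]$ steps, or the backward orbit does, or $0$ is hit \emph{within $q_n$ forward steps}; the third alternative automatically guarantees the next hit is at least $q_n$ later, i.e.\ after $2k_0$. Your invocation of the three‑gap theorem and ``Ostrowski/Sturmian pattern'' for the large‑$a_{m+2}$ case is gesturing at a proof of precisely that lemma, but it is not carried out, and it is the technical heart of the argument.

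The second gap is your claim in the setup that ``every timescale appearing below exceeds $M$ --- this is automatic because all the relevant timescales are $\gtrsim 1/\delta$.'' In case~\ref{prop2c} the relevant timescale is $L\approx f^{(k_0)}$, where $k_0$ is the index of the first hit of $0$, and $k_0$ can be $O(1)$ if $x$ happens to lie very close to $R_\alpha^{-k}(0)$ for some small $k$; nothing in your choice of $Z$ or $\theta_{\xi,M}$ prevents this. The paper handles this with the extra restriction $Z_2$: it removes the set $\bigcup_{n\ge m}\bigcup_{i=-[\sqrt{q_n}]}^{0}R_\alpha^{i}[-\tfrac1{6q_n},\tfrac1{6q_n}]$ (of small measure, since $\sum q_n^{-1/2}<\infty$), which forces $k_0>\sqrt{q_n}$ and hence $L>f_{\min}\sqrt{q_{n_0}}>M$ after choosing $\theta_{\xi,M}$ small enough that $n\ge n_0$. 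You need an analogue of $Z_2$; the Luzin/modulus‑of‑continuity part of your $Z$ does not supply it.
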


For the proof of Proposition~\ref{pr2} we will need the following lemma: 

\begin{lm}[{\cite[Lemma~3.3]{KaSo}}]\label{lin}
Fix $x,y\in \T$, $x\neq y$, and let $n\in \N$ be any integer such that
\begin{equation}\label{distanc}
\|x-y\|< \frac{1}{6q_n}.
\end{equation}
Then one of the following holds:
\begin{enumerate}[(i)]
\item\label{lem3a} $0\notin \bigcup_{k=0}^{\left[\frac{q_{n+1}}{6}\right]}R^k_\alpha[x,y]$;
\item\label{lem3b} $0\notin \bigcup_{k=-\left[\frac{q_{n+1}}{6}\right]}^{0}R^k_\alpha[x,y]$;
\item\label{lem3c} $0\in \bigcup_{k=0}^{q_n-1}R^k_\alpha[x,y]$.
\end{enumerate}
\end{lm}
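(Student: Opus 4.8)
The plan is to rephrase the statement in terms of visits of the orbit $(k\alpha)_{k\in\Z}$ to the short arc $[x,y]$, and then to argue by contradiction, splitting into a trivial case, a ``thin'' case and a ``thick'' case. For $k\in\Z$ one has $0\in R^k_\alpha[x,y]$ iff $-k\alpha\in[x,y]$; thus \ref{lem3c} says $[x,y]$ contains one of $0,-\alpha,\dots,-(q_n-1)\alpha$, while \ref{lem3a} (resp.\ \ref{lem3b}) says $[x,y]$ misses $0,-\alpha,\dots,-[q_{n+1}/6]\alpha$ (resp.\ $0,\alpha,\dots,[q_{n+1}/6]\alpha$). We may assume $0\notin[x,y]$, else \ref{lem3c} holds with $k=0$. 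Suppose all three fail. By unique ergodicity of $R_\alpha$ the first forward visit $\tau^+:=\min\{k\ge1:-k\alpha\in[x,y]\}$ and the first backward visit $\tau^-:=\min\{k\ge1:k\alpha\in[x,y]\}$ are finite; failure of \ref{lem3c} forces $\tau^+\ge q_n$, and failure of \ref{lem3a}, \ref{lem3b} gives $\tau^+,\tau^-\le[q_{n+1}/6]$. If $a_{n+1}\le 5$ then $[q_{n+1}/6]\le q_n-1$, so $\{0,\dots,[q_{n+1}/6]\}\subseteq\{0,\dots,q_n-1\}$ and failure of \ref{lem3c} already yields \ref{lem3a}, a contradiction; hence from now on $q_{n+1}>6q_n$, so in particular $[q_{n+1}/6]\ge q_n$.

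The thin case, $\|x-y\|<2\|q_n\alpha\|$, is self-contained. Write $\ell=\|x-y\|$, $\delta=\|q_n\alpha\|$ and $m=\tau^++\tau^-$. Since $-\tau^+\alpha,\tau^-\alpha\in[x,y]$ we get $\|m\alpha\|\le\ell<2\delta$, while $q_n<m\le 2[q_{n+1}/6]\le q_{n+1}/3<(a_{n+1}+1)q_n/3$. Writing $m=jq_n+r$ with $0\le r<q_n$, this forces $1\le j\le a_{n+1}-3$ (and $j\delta<1/2$). If $r=0$ then $m=jq_n$ with $j\ge2$, so $\|m\alpha\|=j\delta\ge2\delta$; if $1\le r<q_n$ then $\|r\alpha\|\ge\|q_{n-1}\alpha\|$ by \eqref{cf2}, and using the standard identity $\|q_{n-1}\alpha\|=a_{n+1}\|q_n\alpha\|+\|q_{n+1}\alpha\|$ together with $\|jq_n\alpha\|=j\delta$,
\[
\|m\alpha\|\ \ge\ \|q_{n-1}\alpha\|-j\delta\ =\ (a_{n+1}-j)\delta+\|q_{n+1}\alpha\|\ \ge\ 3\delta .
\]
Either way $\|m\alpha\|\ge2\delta$, contradicting $\|m\alpha\|<2\delta$; so in the thin case the configuration does not arise.

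The thick case, $\|x-y\|\ge2\|q_n\alpha\|$, is the substantive one, and I would attack it by exploiting extremality. Since $\tau^+\ge q_n$ and $0\notin[x,y]$, minimality of $\tau^+$ gives $-(\tau^+-q_n)\alpha\notin[x,y]$; as this point lies within $\delta$ of $[x,y]$, the point $-\tau^+\alpha$ must lie within $\delta$ of the endpoint of $[x,y]$ towards which $q_n\alpha$ points. A parallel argument (peeling off \ref{lem3c} in the subcase $\tau^-\le q_n$) places $\tau^-\alpha$ within $\delta$ of the other endpoint, these two endpoints being distinct because $\|x-y\|\ge2\delta$. Hence $-\tau^+\alpha$ and $\tau^-\alpha$ sit near the two ends of $[x,y]$, so $(\tau^++\tau^-)\alpha$ is a residue of a definite sign with $\|(\tau^++\tau^-)\alpha\|\in(\ell-2\delta,\ell]$. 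Combining this sign information with the decomposition $m=\tau^++\tau^-=jq_n+r$ and the continued-fraction relations as above, the aim is to show that either $m$ falls outside the admissible range $(q_n,q_{n+1}/3)$, or some translate $-(\tau^+-jq_n)\alpha$ or $(\tau^--jq_n)\alpha$ re-enters $[x,y]$ with index in $\{0,\dots,q_n-1\}$, contradicting the failure of \ref{lem3c}; I would carry this out as a finite case analysis on whether $q_n\alpha$ lies to the left or right of $0$ and on the precise location of $-\tau^+\alpha,\tau^-\alpha$ inside $[x,y]$.

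The main obstacle is exactly this thick case when $a_{n+1}$ is large: there the crude length comparison that settles the thin case, and a single $q_n$-translation, no longer suffice, and one must keep careful track — through the Ostrowski expansion of $\tau^++\tau^-$ — of which multiple of $q_n$ brings an extremal visit point back into the arc $[x,y]$.
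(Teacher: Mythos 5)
The paper only cites this lemma from \cite{KaSo} and gives no proof, so your argument has to stand on its own. The reductions (pass to $\tau^\pm$, discard $0\in[x,y]$), the disposal of $a_{n+1}\le5$, and the thin case $\|x-y\|<2\|q_n\alpha\|$ are all correct: in particular the chain $\|m\alpha\|\ge\|q_{n-1}\alpha\|-j\|q_n\alpha\|=(a_{n+1}-j)\|q_n\alpha\|+\|q_{n+1}\alpha\|\ge3\|q_n\alpha\|$ is sound (using the reverse triangle inequality for $\|\cdot\|$ and the bound $j\le a_{n+1}-3$, which does follow from $m\le q_{n+1}/3$ once $a_{n+1}\ge6$).

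The genuine gap is the thick case, which you yourself flag as the ``main obstacle'' and leave as a plan (``I would carry this out as a finite case analysis\ldots''). The localization step is actually fine --- your parenthetical about ``peeling off \eqref{lem3c} in the subcase $\tau^-\le q_n$'' is exactly the right fix, since when $1\le\tau^-\le q_n$ the point $(\tau^--q_n)\alpha=-(q_n-\tau^-)\alpha$ lies in $\{-j\alpha:0\le j<q_n\}$ and is excluded from $[x,y]$ by the failure of \eqref{lem3c}, not by minimality of $\tau^-$. But the announced mechanism for finishing (``some translate $-(\tau^+-jq_n)\alpha$ or $(\tau^--jq_n)\alpha$ re-enters $[x,y]$'') is not what closes the argument; those translates move $-\tau^+\alpha$ and $\tau^-\alpha$ \emph{outward}, past the endpoints you just localized them near. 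What the localization actually buys is a sign: writing $m=jq_n+r$, the residue $m\alpha$ has sign opposite to $q_n\alpha$, so $r=0$ is instantly impossible, and for $r\ne0$ one gets $\|r\alpha\|=\|m\alpha\|+j\|q_n\alpha\|\le\|x-y\|+j\|q_n\alpha\|$, which combined with $\|r\alpha\|\ge\|q_{n-1}\alpha\|=a_{n+1}\|q_n\alpha\|+\|q_{n+1}\alpha\|$ and $\|x-y\|<1/(6q_n)$, $\|q_n\alpha\|>1/(2q_{n+1})$ forces $j>(2a_{n+1}-1)/3$, contradicting $j<(a_{n+1}+1)/3$. You should either carry this out or note the cleaner uniform route that avoids the thin/thick dichotomy altogether: write $\tau^+=a_1q_n+a_0$, $\tau^-=b_1q_n+b_0$ with $0\le a_0,b_0<q_n$, $a_1\ge1$. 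If $a_0+b_0\in\{0,q_n\}$ then $-a_0\alpha\in\{-j\alpha:0\le j<q_n\}$ lies on the short arc between $-\tau^+\alpha$ and $\tau^-\alpha$ (both of $a_1\|q_n\alpha\|,(b_1+1)\|q_n\alpha\|$ being less than $1/2$ since $a_{n+1}\ge6$), contradicting the failure of \eqref{lem3c}; otherwise $\|(a_0+b_0)\alpha\|\ge\|q_{n-1}\alpha\|-\|q_n\alpha\|$ and the reverse triangle inequality gives $\|(\tau^++\tau^-)\alpha\|\ge\|q_{n-1}\alpha\|-(a_1+b_1+1)\|q_n\alpha\|>1/(6q_n)>\|x-y\|$, a contradiction, with no separate treatment of large $\|x-y\|$ needed.
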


\begin{proof}[Proof of Proposition \ref{pr2}]
We give the proof assuming that $A_f>0$, the proof in the other case is analogous.
We assume that $\xi<\min\{\frac{f_{\min}}{4},\frac{A_f}{72}\}$.
Let $\theta_{\xi,M}>0$ be such that for every $x,y\in \T$, $\|x-y\|< \theta_{\xi,M}$, and every $k\in \Z$
\begin{equation}\label{ac2}
|f_{ac}^{(k)}(x)-f_{ac}^{(k)}(y)|<\frac{\xi}{20}\max\{ 1, |k| \|x-y\| \}
\end{equation}
(Lemma~\ref{c1}).
We will also assume that $\theta_{\xi,M} < \min\{\frac{\xi}{10}, \frac{\xi}{20A_f}, 
\frac{12\xi}{A_f q_{n_0}}\}$,
where $n_0$ is such that $q_{n_0}>\max\{{12M}{f_{\min}^{-1}},{M^2}{f_{\min}^{-2}}\}$.
Let
$$
Z_1:=\{(x,s)\in\T^f\,:\,\theta_{\xi,M}<s<f(x)-\theta_{\xi,M}\},
$$
so that for every $(x,s),(y,r)\in Z_1$, $d^f((x,s),(y,r))=\|x-y\|+|s-r|$ if $d^f((x,s),(y,r))<\theta_{\xi,M}$, $\mu^f(Z_1)>1-\xi/2$.
Let $B_n:=\bigcup_{i=-[\sqrt{q_n}]}^0 R_\alpha^i[-\frac{1}{6q_n},\frac{1}{6q_n}]$, $\mu(B_n)< \frac{1}{\sqrt{q_n}}$.
Since $\sum_{n=1}^\infty \frac{1}{\sqrt{q_n}}<\infty$, as follows easily from \eqref{cf0}, we can find $m\in\N$ such that $\mu(\bigcup_{n=m}^\infty B_n)< \frac{\xi}{2f_{\max}}$.
Define
$$
Z_2:=\{(x,s)\in\T^f \,:\, x\notin \bigcup_{n=m}^\infty B_n \},
$$
$\mu^f(Z_2)>1-\xi/2$,
and let $Z:=Z_1\cap Z_2$.

Take any $(x,s),(y,r)\in Z$ with $d^f((x,s),(y,r))<\theta_{\xi,M}$, $x\neq y$.
Let $n\in\N$ be unique such that
$$
\frac{12\xi}{A_f q_{n+1}} \leq \|x-y\|< \frac{12\xi}{A_f q_n}.
$$
Notice that $n\geq n_0$.
Since $n$ satisfies \eqref{distanc} for $x,y$, one of \eqref{lem3a}--\eqref{lem3c} in Lemma~\ref{lin} holds.

Assume first that $x,y$ satisfy \eqref{lem3a} (the forward orbit of $[x,y]$ avoids the discontinuity up to time $\frac{q_{n+1}}{6}$).
We will prove \eqref{prop2a} in this case.
Let $k_0:=\big[ \frac{\xi}{A_f\|x-y\|} \big]$.
By the assumptions on $\xi$ and $\theta_{\xi,M}$ we have ${q_{n}}/{12} \leq k_0 \leq {q_{n+1}}/{12}$.
For every $k\in[0,k_0]$, by \eqref{ac2} and the definition of $k_0$,
$$
|f^{(k)}(x)-f^{(k)}(y)| \leq |f_{ac}^{(k)}(x)-f_{ac}^{(k)}(y)| + A_f|k|\|x-y\| <  {3\xi}/{2}.
$$
Let $L:=f^{(k_0+1)}(x)-s$, $L>f_{\min}q_n/12\geq f_{\min}q_{n_0}/12>M$.
For every $t\in [0,L]$,
$$
d^f(T_t^f(x,s),T_t^f(y,r)) < \theta_{\xi,M} + {3\xi}/{2} < 2\xi.
$$
We also have
\begin{align*}
|f^{(k_0)}(x) - f^{(k_0)}(y) - \xi|
&\leq |f^{(k_0)}_{ac}(x) - f^{(k_0)}_{ac}(y)| + |A_f k_0 \|x-y\| - \xi| \\
&< {\xi}/{20} + A_f \theta_{\xi,M} < {\xi}/{10}.
\end{align*}
Since $0\notin \bigcup_{k=k_0}^{2k_0} R_\alpha^k[x,y]$, we have for every $k\in[k_0,6k_0/5]\cap\Z$,
$$
|f^{(k-k_0)}(x+k_0\alpha)-f^{(k-k_0)}(y+k_0\alpha)| < {\xi}/{20} + A_f|k-k_0|\theta_{\xi,M} < {3\xi}/{10}.
$$
From the above two inequalities and the cocycle equality we get
for every $k\in[k_0,6k_0/5]\cap\Z$
\begin{align*}
|f^{(k)}(x)-f^{(k)}(y)-\xi|
&\leq |f^{(k_0)}(x) - f^{(k_0)}(y) - \xi| \\
&+ |f^{(k-k_0)}(x+k_0\alpha)-f^{(k-k_0)}(y+k_0\alpha)|\\
&< {4\xi}/{10}.
\end{align*}
Therefore, for all $t\in[f^{(k_0)}(x)-s, f^{([6k_0/5])}(x)-s]$,
$$
d^f(T_{t}^f(x,s),T_{t-\xi}^f(y,r)) < \theta_{\xi,M} + {4\xi}/{10} < {\xi}/{2}.
$$
This completes the proof of \eqref{prop2a}.

If \eqref{lem3b} holds for $x,y$, one can similarly prove \eqref{prop2b}.
The proof in this case goes along the same lines, one just needs to switch the time direction
from positive to negative, therefore we skip it.
It remains to consider the case \eqref{lem3c} and prove \eqref{prop2c}.

If $x,y$ satisfy \eqref{lem3c}, there exists $0\leq k_0 < q_n$ such that $0\in R_\alpha^{k_0}[x,y]$.
By the definition of $Z$, $k_0 > \sqrt{q_n}$.
We also know from \eqref{cf2} that $0\notin R_\alpha^{k}[x,y]$ for all $0\leq k < k_0$ and $k_0 < k < k_0 + q_n$.
For every $k\in[0,k_0]$,
$$
|f^{(k)}(x)-f^{(k)}(y)| \leq |f_{ac}^{(k)}(x)-f_{ac}^{(k)}(y)| + A_f k\|x-y\| <  {\xi}/{20} + 12{\xi} < {13\xi}.
$$
Let $L:=f^{(k_0+1)}(y)-r$, $L>f_{\min}\sqrt{q_n}\geq f_{\min}\sqrt{q_{n_0}}>M$.
For every $t\in [0,L]$ we have
$$
d^f(T_t^f(x,s),T_t^f(y,r)) < \theta_{\xi,M} + {13\xi} < 14\xi.
$$
On the other hand by the definition of $k_0$ we have
\begin{multline*}
|f^{(k_0+1)}(x)-f^{(k_0+1)}(y)-A_f| \leq |f_{ac}^{(k_0+1)}(x)-f_{ac}^{(k_0+1)}(y)|+\\
+A_f\theta_{\xi,M}+ A_f (k_0+1)\|x-y\| <  {\xi}/{20} + {\xi}/{20}+ 12\xi < {13\xi}.
\end{multline*}
Since $0\notin \bigcup_{k=k_0}^{2k_0} R_\alpha^k[x,y]$, we have for every $k\in(k_0,2k_0]\cap\Z$,
$$
|f^{(k-k_0)}(x+k_0\alpha)-f^{(k-k_0)}(y+k_0\alpha)| <13\xi.
$$
Finally, for every $t\in [L,(1+\frac{f_{\min}}{f_{\max}})L]$, we get
$$d^f(T^f_{t+A_f}(x,s),T^f_{t}(y,r))<30\xi.$$
This finishes the proof of \eqref{prop2c}.

\end{proof}

\subsection{$\alpha$ of bounded type}
For $\alpha$ of bounded type we have a weaker statement than Proposition \ref{pr1} (see Proposition \ref{pr3} below). Therefore we need a stronger statement than Proposition \ref{pr2} (see Proposition \ref{pr4} below). We will use Propositions \ref{pr3} and \ref{pr4} in the proof of Theorem \ref{main} in the case both $\alpha$ and $\beta$ are of bounded type.

\begin{pr}\label{pr3}
Assume that $\alpha$ has bounded type.
For every $\eta>0$ there exist sequences $\delta_n\to 0$ and $E_n\subset \T^f$ such that $\mu^f(\bigcup_{n}E_{k_n})=1$ for every increasing sequence $(k_n)$, $(x+\delta_n,s)\in \T^f$ whenever $(x,s)\in E_n$ and for every $n\in\N$ and $(x,s)\in E_n$ there exists $M_n>n$ such that the following holds:
\begin{enumerate}
\item[$(A')$]\label{A'} for every $t\in[0,M_n]$ we have
$$d^f(T^f_t(x,s),T^f_t(x+\delta_n,s))<\eta;$$
\item[$(B')$]\label{B'} for every $t\in[M_n,(1+\frac{f_{\min}}{f_{\max}})M_n]$ we have%
\begin{align*}
&d^f(T^f_{t+A_f}(x,s),T^f_{t}(x+\delta_n,s))<\eta \quad \text{if} \quad A_f>0,\\
&d^f(T^f_{t}(x,s),T^f_{t-A_f}(x+\delta_n,s))<\eta \quad \text{if} \quad A_f<0.
\end{align*}
\end{enumerate}
\end{pr}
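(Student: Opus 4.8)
The plan is to repeat the proof of Proposition~\ref{pr1} almost verbatim, keeping only its forward half. The backward half there -- the requirement that $[x,x+\delta_n]$ also meet the discontinuity at some negative time $i_n-q_{l_n}$ -- is exactly what forced $\alpha$ to have unbounded type, through the gadget \eqref{visits}. Asking only for a single, well-isolated forward hit removes the need for the carefully cut interval $I_n$: a full-length interval abutting $0$ suffices.

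I would assume $A_f>0$ (the case $A_f<0$ being symmetric), fix $\eta>0$ small in the same sense as in the proof of Proposition~\ref{pr1} (so that $\eta<\min\{1/50,A_f^{-2},f_{\min}\}$ and $4\eta^{3/2}<f_{\min}$), and take $\delta>0$ from Lemma~\ref{c1} applied to $f_{ac}$ with error $\eta^{3/2}$. Choosing an increasing sequence $(l_n)$ that grows fast enough ($q_{l_1}>\eta^2/\delta$, $l_nf_{\min}>n+f_{\max}$, $q_{l_n}>4l_n$), I would set $\delta_n:=\eta^2/q_{l_n}$, $J_n:=[-\delta_n,0]\subset\T$, $B_n:=\bigsqcup_{k=l_n}^{q_{l_n}-l_n}R_\alpha^{-k}J_n$ and $E_n:=\{(x,s)\in\T^f:x\in B_n \text{ and } (x+\delta_n,s)\in\T^f\}$. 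Since $\delta_n<\tfrac{1}{2q_{l_n}}<\|q_{l_n-1}\alpha\|$, inequality~\eqref{cf2} shows the $q_{l_n}-2l_n+1$ translates $R_\alpha^{-k}J_n$ are pairwise disjoint, so $\mu(B_n)=(q_{l_n}-2l_n+1)\delta_n>\eta^2/2$; as $B_n$ is again a disjoint union of translates of one interval along an orbit segment of the uniquely ergodic rotation $R_\alpha$, the proof of Lemma~\ref{indep} carries over with $I_n$ replaced by $J_n$ and gives $\mu^f(\bigcup_nE_{k_n})=1$ for every increasing $(k_n)$.

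Given $(x,s)\in E_n$, let $i_n\in[l_n,q_{l_n}-l_n]$ be the unique integer with $0\in R_\alpha^{i_n}[x,x+\delta_n]$. The single observation replacing \eqref{visits} is: if $0\in R_\alpha^{k}[x,x+\delta_n]$ for some $k\ne i_n$, then $\|(k-i_n)\alpha\|\le\delta_n<\|q_{l_n-1}\alpha\|$, so $|k-i_n|\ge q_{l_n}$ by \eqref{cf2}; hence the forward orbit of $[x,x+\delta_n]$ avoids $0$ on $[0,i_n)$ and on $(i_n,i_n+q_{l_n})$. From here the estimates are those of the proof of Proposition~\ref{pr1}: Lemma~\ref{c1} and $|k|\delta_n\le\eta^2$ give $|f^{(k)}(x)-f^{(k)}(x+\delta_n)|<2\eta^{3/2}$ for $0\le k\le i_n$, which is $(A')$ with $M_n:=f^{(i_n+1)}(x+\delta_n)-s>l_nf_{\min}-f_{\max}>n$; and the cocycle identity, with the lone jump at step $i_n$, gives $|f^{(k)}(x)-f^{(k)}(x+\delta_n)-A_f|<6\eta^{3/2}$ for $i_n<k<i_n+q_{l_n}$, which is $(B')$ on $[M_n,f^{(i_n+q_{l_n}+1)}(x+\delta_n)-s]$ -- an interval containing $[M_n,(1+\tfrac{f_{\min}}{f_{\max}})M_n]$ because $M_n\le q_{l_n}f_{\max}$ while $f^{(i_n+q_{l_n}+1)}(x+\delta_n)-s-M_n=f^{(q_{l_n})}(R_\alpha^{i_n+1}(x+\delta_n))\ge q_{l_n}f_{\min}\ge\tfrac{f_{\min}}{f_{\max}}M_n$.

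I do not expect a genuine obstacle: the statement is strictly weaker than Proposition~\ref{pr1}, and in fact the hypothesis that $\alpha$ has bounded type plays no role in the argument just sketched -- it only records that, for such $\alpha$, one cannot strengthen $(A')$--$(B')$ to the two-sided conclusion of Proposition~\ref{pr1}. The one point that requires attention is that the window $[M_n,(1+\frac{f_{\min}}{f_{\max}})M_n]$ is still covered, i.e.\ that the orbit of $[x,x+\delta_n]$ performs its next crossing of the discontinuity only after at least $q_{l_n}$ more steps; this is precisely what inequality~\eqref{cf2} supplies, and it is the only role of the continued fraction input in this proposition.
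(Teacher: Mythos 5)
Your argument is correct and follows the paper's strategy: keep only the forward half of the Proposition~\ref{pr1} construction and discard the backward hit of the discontinuity. The one substantive point where you depart from the paper's text is worth naming. The paper says to define $I_n$, $B_n$, $E_n$ ``the same way as in Proposition~\ref{pr1},'' i.e.\ $I_n=[-\eta^3/q_{l_n},-2/q_{l_n+1}]$; but that interval is nonempty only when $q_{l_n+1}/q_{l_n}>2/\eta^3$, a condition that in Proposition~\ref{pr1} is imposed on $(l_n)$ and that is simply unavailable for $\alpha$ of bounded type once $\eta$ is small. Your replacement $J_n=[-\delta_n,0]$ is exactly the right repair: it trivially guarantees the single forward hit $0\in R_\alpha^{i_n}[x,x+\delta_n]$, the translates $R_\alpha^{-k}J_n$, $l_n\le k\le q_{l_n}-l_n$, remain pairwise disjoint by \eqref{cf2} because $\delta_n<1/(2q_{l_n})<\|q_{l_n-1}\alpha\|$, and the measure lower bound $\mu(B_n)>\eta^2/2$ that Lemma~\ref{indep} needs is preserved. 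Your concluding remark is also accurate: the bounded-type hypothesis plays no logical role in the proof, and the statement holds for every irrational $\alpha$; its only purpose in the paper is to flag that the stronger two-sided Proposition~\ref{pr1} is not available for such $\alpha$.
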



\begin{proof} The proof, up to one difference, follows the same lines as the proof of Proposition \ref{pr1}.
In this case $(l_n)$ can be any (sparse) subsequence of $\N$, so that $q_{l_1}\geq \frac{\eta^2}{\delta}$, $q_{l_n}\geq 4l_n$. Let $\delta_n:=\frac{\eta^2}{q_{l_n}}$.
We define sets $I_n$, $B_n$ and $E_n$ the same way as in Proposition \ref{pr1} (notice that \eqref{visits} does not hold anymore).
Then it follows by Lemma~\ref{indep} that $\mu^f(\bigcup_nE_{k_n})=1$ for every increasing sequence $(k_n)$.

We define $i_n$ the same way, then $0\in R_\alpha^{i_n}[x,x+\delta_n]$ (contrary to the proof of Proposition \ref{pr1}, it is not true that $0\in R_\alpha^{i_n-q_{l_n}}[x,x+\delta_n]$) and for every $i_n<k<i_n+q_{l_n}$ \eqref{nojump} holds.
Then for every $k\in [0,i_n]$, we have
$$
|f^{(k)}(x)-f^{(k)}(x+\delta_n)|< 2\eta^{3/2}.
$$
We define $M_n:=f^{(i_n+1)}(x+\delta_n)-s$. From this point on, the proof is a word by word repetition of the proof of Proposition \ref{pr1} starting below \eqref{est}.
\end{proof}

\begin{pr}\label{pr4}
Assume that $\alpha$ has bounded type.
For every $\xi>0$ small enough there exists a set $Z\subset \T^f$, $\mu^f(Z)>1-\xi$, and $\theta_{\xi,M}>0$ such that for every $(x,s),(y,r)\in Z$, $d^f((x,s),(y,r))<\theta_{\xi,M}$, $x\neq y$, there exists 
$L > M$ such that at least one of the following holds:
\begin{enumerate}
\item[$(a')$]
for every $t\in[0,L]$ we have
$$
d^f(T^f_t(x,s),T^f_t(y,r))<2\xi,
$$
and for every $t\in [L,(1+\frac{f_{\min}}{10f_{\max}})L]$ we have
\begin{align*}
&d^f(T^f_t(x,s),T^f_{t+\xi}(y,r))<\xi/2 \quad \text{if} \quad A_f>0,\\
&d^f(T^f_t(x,s),T^f_{t-\xi}(y,r))<\xi/2 \quad \text{if} \quad A_f<0;
\end{align*}
\item[$(b')$]
for every $t\in[0,L]$ we have
$$
d^f(T^f_t(x,s),T^f_t(y,r))<15\xi,
$$
and for every $t\in [L, (1+\frac{f_{\min}}{f_{\max}})L]$ we have
\begin{align*}
&d^f(T^f_{t+A_f}(x,s),T^f_{t}(y,r))<30\xi \quad \text{if} \quad A_f>0,\\
&d^f(T^f_t(x,s),T^f_{t-A_f}(y,r))<30\xi \quad \text{if} \quad A_f<0.
\end{align*}
\end{enumerate}

\end{pr}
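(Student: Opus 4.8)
The plan is to follow the proof of Proposition~\ref{pr2} almost verbatim, the only structural change being that the bounded type hypothesis lets us replace the three-way alternative of Lemma~\ref{lin} by a purely \emph{forward} dichotomy; this is why Proposition~\ref{pr4} has no analogue of case~\eqref{prop2b}, just as Proposition~\ref{pr3} (unlike Proposition~\ref{pr1}) produces only forward divergence. Assume $A_f>0$, the other case being symmetric, and, using that $\alpha$ is of bounded type, fix $c\ge1$ with $q_{n+1}\le c\,q_n$ for every $n$; we take $\xi$ small relative to $c$, $A_f$, $f_{\min}$ and $f_{\max}$. Set $\gamma:=4\xi c/A_f$, shrinking $\xi$ so that $\gamma<\tfrac16$. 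By Lemma~\ref{c1}, choose $\theta_{\xi,M}>0$ so small that the $f_{ac}$-part of any cocycle difference over an interval of length $<\theta_{\xi,M}$ is $<\xi/20$, and in addition $\theta_{\xi,M}<\min\{\xi^2/2,\ \xi/(20A_f),\ \gamma/q_{n_0}\}$, where $n_0$ is chosen large enough (depending on $M$, $c$, $f_{\min}$) that $q_{n_0}>\max\{8cMf_{\min}^{-1},\,M^2f_{\min}^{-2}\}$ and $n_0\ge m$ for the index $m$ introduced below.

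Define $Z:=Z_1\cap Z_2$, where $Z_1:=\{(x,s)\in\T^f:\xi^2<s<f(x)-\xi^2\}$ — so that $\mu^f(Z_1)>1-\xi/2$ and $d^f$ agrees with the product metric on pairs of $\theta_{\xi,M}$-close points of $Z_1$ — and $Z_2:=\{(x,s)\in\T^f:x\notin\bigcup_{n\ge m}B_n\}$ with $B_n:=\bigcup_{i=-[\sqrt{q_n}]}^0 R_\alpha^i[-\tfrac1{6q_n},\tfrac1{6q_n}]$ and $m=m(\xi)$ chosen so that $\mu^f(Z_2)>1-\xi/2$, which is possible since $\sum_n q_n^{-1/2}<\infty$, exactly as in the proof of Proposition~\ref{pr2}. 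Note that $Z$ does not depend on $M$. As in that proof, if $x\notin B_n$ with $n\ge m$ and $\|x-y\|<\tfrac1{6q_n}$, then the forward orbit of $[x,y]$ avoids $0$ during its first $[\sqrt{q_n}]$ iterates.

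Take $(x,s),(y,r)\in Z$ with $d^f((x,s),(y,r))<\theta_{\xi,M}$ and $x\ne y$, and let $n$ be the unique index with $\tfrac{\gamma}{q_{n+1}}\le\|x-y\|<\tfrac{\gamma}{q_n}$; then $n\ge n_0\ge m$, and a short computation using $\gamma=4\xi c/A_f$ and $q_{n+1}\le cq_n$ gives $\tfrac{q_n}{8c}\le k_0:=\big[\tfrac{\xi}{A_f\|x-y\|}\big]\le\tfrac{q_n}{4}$, so in particular $3k_0<q_n$. If $0\notin\bigcup_{k=0}^{[6k_0/5]}R_\alpha^k[x,y]$, then with $L:=f^{(k_0+1)}(x)-s$ — which exceeds $M$ because $k_0\ge q_n/(8c)\ge q_{n_0}/(8c)$ and $q_{n_0}>8cMf_{\min}^{-1}$ — the estimates of case~\eqref{prop2a} of Proposition~\ref{pr2} apply without change (our hypothesis supplies precisely the avoidance of $0$ up to time $[6k_0/5]$ that those estimates use) and give $(a')$. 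Otherwise let $k_1\le[6k_0/5]$ be the smallest integer with $0\in R_\alpha^{k_1}[x,y]$; since $k_1<3k_0<q_n$, inequality~\eqref{cf2} shows $k_1$ is the \emph{only} return of $[x,y]$ to $0$ in $[0,q_n)$, so the forward orbit of $[x,y]$ avoids $0$ on $[0,k_1)\cup(k_1,k_1+q_n)$, which contains $[0,3k_1]\setminus\{k_1\}$ because $2k_1<q_n$. Splitting off the single jump of $f$ at time $k_1$ via the cocycle identity (the $f_{ac}$-parts being controlled by the cocycle form of Lemma~\ref{c1}) and otherwise repeating the estimates of case~\eqref{prop2c} of Proposition~\ref{pr2}, we obtain $(b')$ with $L:=f^{(k_1+1)}(y)-r$; here $L>M$ since $k_1>[\sqrt{q_n}]$, by the previous paragraph applied to $x\notin B_n$, and $q_{n_0}>M^2f_{\min}^{-2}$.

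The only genuinely new content is the forward dichotomy in place of Lemma~\ref{lin}, and its correctness rests entirely on the choice $\gamma\asymp\xi c/A_f$ — i.e.\ on taking $\xi$ small relative to $\alpha$ (through $c$) and to $A_f$ — which forces $3k_0<q_n$ and thereby places every iteration window needed for the slow and the fast split inside $[0,q_n)$, the range where \eqref{cf2} provides the required separation. I expect this constant bookkeeping — in particular reconciling the size of $\gamma$ with the lower bound on $k_0$ that yields $L>M$ — to be the main (and essentially the only) point requiring care; the two families of split estimates are imported verbatim from the proof of Proposition~\ref{pr2}.
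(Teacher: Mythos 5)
Your proposal is correct and follows essentially the same route as the paper's proof: the same sets $Z_1,Z_2,Z$, the same threshold $k_0=[\xi/(A_f\|x-y\|)]$, the same forward two-case dichotomy (jump seen or not seen within a multiple of $k_0$), and the same importing of the slow-split and fast-split estimates from Proposition~\ref{pr2}. The only notable deviations are cosmetic: you parametrize the dyadic interval for $\|x-y\|$ by $\gamma=4\xi c/A_f$ (with $c=\sup q_{n+1}/q_n$) rather than by $\tfrac12$ with a separate smallness condition $\xi<(10\sup_n a_n+1)^{-2}$, which makes the bound $3k_0<q_n$ transparent; you place the dichotomy cut at $[6k_0/5]$ rather than $2k_0$ (both suffice, since the post-split estimates only need avoidance up to $6k_0/5$); and you define $Z_1$ via $\xi^2$ rather than $\theta_{\xi,M}$, which has the mild advantage of making $Z$ manifestly independent of $M$, and is compatible once $\theta_{\xi,M}<\xi^2$.
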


\begin{proof}
The proof is similar to the proof of Proposition \ref{pr2}.
Assume $A_f>0$.
Fix $\xi< (10\sup_{n}a_n+1)^{-2}$.
We define $\theta_{\xi,M},Z_1,Z_2$ and $Z$ the same way as in the proof of Proposition \ref{pr2}. Take any $(x,s),(y,r)\in Z$ and let $n$ be unique satisfying
$$
\frac{1}{2q_{n+1}} \leq \|x-y\|< \frac{1}{2q_n}.
$$
Denote $k_0:= \big[\frac{\xi}{A_f\|x-y\|}\big]$. We have the following cases:

\textbf{Case~1}: $0\notin \bigcup_{i=0}^{2k_0}R_\alpha^i[x,y]$.
Then we show $(a')$.
Notice that for every $k\in [0,k_0]$ we have
$$
|f^{(k)}(x)-f^{(k)}(y)| \leq |f_{ac}^{(k)}(x)-f_{ac}^{(k)}(y)| + A_f|k|\|x-y\| <  3\xi/2
$$
and therefore for $L:=f^{(k_0)}(x)-s$ we have for $t\in [0,L]$
$$
d^f(T_t^f(x,s),T_t^f(y,r))<2\xi.
$$
Moreover, for every $k\in [k_0,6k_0/5]$, we have
$$
|f^{(k)}(x)-f^{(k)}(y)-\xi| < 4\xi/10.
$$
Therefore, for every $t\in [L, 1+\frac{f_{\min}}{10f_{\max}}L]$, we have
$$
d^f(T_t^f(x,s),T_{t+\xi}^f(y,r))<\xi/2,
$$
which gives $(a')$.

\textbf{Case~2}: $0\in \bigcup_{i=0}^{2k_0}R_\alpha^i[x,y]$.
Then let $i_0$ be such that $0\in [x+i_0\alpha, y+i_0\alpha]$.
Since $x,y\in Z$ it follows that $i_0 > \sqrt{q_n}>M$ by the choice of $\theta_{\xi,M}$.
Let $L:=f^{(i_0)}(x)-s$.
Analogously to the previous case we show that for $t\in[0,L]$ we have
$$
d^f(T_t^f(x,s),T_t^f(y,r))<4\xi.
$$
Moreover since $\alpha$ is of bounded type, it follows that $i_0\leq 2k_0< q_n$, and therefore $0\notin R_\alpha^i[x,y]$ for $i\in [i_0+1,2i_0]$.
Hence for every $t\in [L, 1+\frac{f_{\min}}{f_{\max}}L]$, we have
$$
d^f(T_{t+A_f}^f(x,s),T_{t}^f(y,r))<30\xi.
$$
This finishes the proof.
\end{proof}

\section{Proof of Theorem \ref{main}}
In this section we will use Propositions \ref{pr1}, \ref{pr2}, \ref{pr3} and \ref{pr4} from the previous section to prove Theorem \ref{main}.

\begin{proof}[Proof of Theorem \ref{main}]

We will argue by contradiction.
Let $S:\T^f\to \T^g$ be an isomorphism between $\cT$ and $\cR$, i.e. $S\circ T^f_t=R_t^g\circ S$ for every $t\in\R$.
We will consider separately two cases.

\textbf{Case 1.} \textit{Either $\alpha$ or $\beta$ has unbounded type.}
Assume for definiteness that $\alpha$ has unbounded type.
We will use Proposition \ref{pr1} for $\alpha$ and Proposition \ref{pr2} for $\beta$.
By Luzin's theorem, there exists a set $X\subset \T^f$, $\mu^f(X)> 1-\frac{f_{\min}}{20f_{\max}}$, such that for any $\varepsilon>0$ there is $\eta = \eta(\varepsilon)>0$
such that for every $(x,s),(y,r) \in X$,
\begin{equation}\label{unifcont1}
d^f((x,s),(y,r))<\eta \quad \Rightarrow \quad d^g(S(x,s),S(y,r))<\varepsilon.
\end{equation}
Since $\cT$ is ergodic, for $\mu$-a.e.\ $(x,s)\in \T^f$,
$$
\lim_{M\to\infty}\frac{1}{M} \lambda\{ t\in[0,M] \,:\, T^f_{t+\iota A_f}(x,s) \in X \} = \mu^f(X),
$$
$\iota=-1,0,1$.
Therefore, by Egorov's theorem, there exists a set $Y\subset \T^f$, $\mu^f(Y)> 0.99$, and $M_0>0$ such that for every $(x,s)\in Y$ and every $N<M$, $|M|,|N|\geq M_0$, $\frac{|M-N|}{|M|}\geq \min\{ \frac{f_{\min}}{2f_{\max}}, \frac{g_{\min}}{20g_{\max}} \}$, we have
\begin{equation}\label{yeg1}
\lambda\{t\in [N,M]\;:\; T_{t+ \iota A_f}^f(x,s)\in X, \,\iota=-1,0,1\}\geq 0.9|M-N|.
\end{equation}
Fix $0<\xi\ll f_{\min}, g_{\min}, A_f, A_g, \min\{\|k\beta\|:0<k\leq \frac{|A_f|+|A_g|}{g_{\min}}+1\}$ and let $Z\subset \T^g$ and $\theta_{\xi,M_0}<\xi/10$ come from Proposition~\ref{pr2}, $\mu^g(Z)>0.99$.
Choose $\eta>0$ in \eqref{unifcont1} so that for all $(x,s),(y,r)\in X$
\begin{equation}\label{unifcont2}
d^f((x,s),(y,r))<\eta \quad \Rightarrow \quad d^g(S(x,s),S(y,r))<\theta_{\xi,M_0}.
\end{equation}
Consider the set
\begin{equation}\label{defv}
V:=X\cap Y\cap S^{-1}Z\subset \T^f,
\end{equation}
$\mu^f(V)>0.9$.
Let $(E_n)$ and $(\delta_n)$ be the sequences coming from Proposition~\ref{pr1} for $\eta$ as above.
Since $\delta_n\to 0$, there exists an increasing sequence $(k_n)$ such that the following set
$$
V_0= \{(x,s)\in V \,:\, (x+\delta_{k_n},s) \in V \text{ for every } n\in\N \}
$$
has positive measure. 
We may assume without loss of generality that $\delta_{k_1}<\eta$ and $k_1>M_0$.
By Proposition~\ref{pr1}, $\mu^f(\bigcup_n E_{k_n}\cap V_0)>0$.
Fix any $(x,s)\in \bigcup_n E_{k_n}\cap V_0$.
By the definition of $V_0$, there exists $j\in \N$ such that
$$
(x,s)\in E_{k_j}\text{ and } (x,s),(x+\delta_{k_j},s)\in V.
$$
Let $M_{k_j}, N_{k_j}$ be as in Proposition~\ref{pr1}, so that \eqref{A} and \eqref{B} holds for the point $(x,s)$,
$M_{k_j}>M_0$, $N_{k_j}<-M_0$.
By the definition of $V$ it follows that $\x,\y\in Z$ and $d^g(\x,\y) < \theta_{\xi,M_0}$.
Therefore, by Proposition~\ref{pr2} (for the flow $\cR$ and the roof function $g$), there exists $L>M_0$ such that one of \eqref{prop2a}, \eqref{prop2b} or \eqref{prop2c} holds for $\x, \y$.
We will conduct the proof assuming that \eqref{prop2a} or \eqref{prop2c} holds.
The proof in the case \eqref{prop2b} is completely analogous to the proof in the case \eqref{prop2a}, one just needs to go backward instead of going forward.
Denote
$$
P:=\{t\in \R\;:\; T_{t+ \iota A_f}^f(x,s),T_{t+ \iota A_f}^f(x+\delta_{k_j},s)\in X, \,\iota=-1,0,1\}.
$$
Since $(x,s),(x+\delta_{k_j},s)\in V\subset Y$, it follows by \eqref{yeg1} that at least one of the following sets is nonempty:
\begin{align*}
I_1 &:= P\cap[0,M_{k_j}]\cap[L,(1+\tfrac{g_{\min}}{10g_{\max}})L], \\
I_2 &:= P\cap [M_{k_j}, (1+\tfrac{f_{\min}}{f_{\max}})M_{k_j}] \cap[L,(1+\tfrac{g_{\min}}{10g_{\max}})L], \\
I_3 &:= P\cap [M_{k_j},(1+\tfrac{f_{\min}}{f_{\max}})M_{k_j}]\cap[0,L].
\end{align*}
For every $t\in P\cap[0,M_{k_j}]$ we have by \eqref{A} and \eqref{unifcont2}
\begin{equation}\label{eqA}
d^g(R^g_t(\x), R^g_t(\y))<\theta_{\xi,M_0}<\xi/10,
\end{equation}
and for every $t\in P\cap[M_{k_j},(1+\tfrac{f_{\min}}{f_{\max}})M_{k_j})$ we have by \eqref{B} and \eqref{unifcont2}
\begin{equation}\label{eqB}
d^g(R^g_t(\x), R^g_{t\pm A_f}(\y))<\theta_{\xi,M_0}<\xi/10.
\end{equation}
On the other hand, by Proposition~\ref{pr2}, for every $t\in[0,L]$ we have
\begin{equation}\label{eq0}
d^g(R^g_t(\x), R^g_t(\y))<15\xi
\end{equation}
and for every $t\in[L,(1+\tfrac{g_{\min}}{10g_{\max}})L]$ we have either
\begin{equation}\label{eqa2}
d^g(R^g_t(\x), R^g_{t\pm \xi}(\y))<\xi/2,
\end{equation}
or
\begin{equation}\label{eqc2}
d^g(R^g_t(\x), R^g_{t\pm A_g}(\y))<30\xi.
\end{equation}
Denote $y:=\y$.
Combining \eqref{eqA} and \eqref{eqB} with \eqref{eq0}, \eqref{eqa2} or \eqref{eqc2} we get by the triangle inequality that for $t\in I_1\cup I_2\cup I_3$,
$$
d^g(R^g_t(y),R^g_{t\pm\xi}(y))<\xi \quad \text{or} \quad d^g(R^g_t(y),R^g_{t\pm a}(y))<40\xi
$$
for some $a\in\{A_f,A_g,A_f\pm A_g,A_f\pm\xi\}$.
But none of the above two inequalities can hold, because $d^g( R^g_t(y), (R^g_{t\pm\xi}(y) )=\xi$ (since $\xi$ is small) and $d^g(R^g_t(y),R^g_{t\pm a}(y))\geq \min\{|a|,\min_{0<k\leq|a|g_{\min}^{-1}+1}\|k\beta\|\} > 50\xi$ by the choice of $\xi$.
The obtained contradiction finishes the proof in the first case.
\medskip

\textbf{Case 2.} \textit{Both $\alpha$ and $\beta$ have bounded type.}
In this case we argue completely analogously to the Case~1 using Propositions \ref{pr3} and \ref{pr4} instead of Propositions \ref{pr1} and \ref{pr2}.
\end{proof}

\section{Concluding remarks}
Two probability preserving flows $\cT=(T_t)_{t\in\R}$ on $(X,\mu)$ and $\cR=(R_t)_{t\in\R}$ on $(Y,\nu)$ are called \emph{weakly isomorphic} if $\cT$ is a factor of $\cR$ and vise versa, i.e.\ if there exist measure preserving maps $S_1\colon X\to Y$ and $S_2\colon Y\to X$ such that $S_1\circ T_t = R_t\circ S_1$ and $S_2\circ R_t = T_t\circ S_2$ for every $t\in\R$.
Since in the proof of Theorem~\ref{main} we did not use an assumption that the conjugacy $S$ is invertible, in fact, a stronger statement follows: if $|A_f|\neq|A_g|$, then the von Neumann flows $\cT=(T^f_t)_{t\in\R}$ and $\cR=(R^g_t)_{t\in\R}$ are not weakly isomorphic.

\section{Acknowledgements}
The authors would like to thank C.~Ulcigrai for drawing our attention to \cite{Ly88}.
The research leading to these results has received funding from the European Research Council under the European Union's Seventh Framework Programme (FP/2007-2013) / ERC Grant Agreement n.~335989.

\bigskip

\textsc{Department of Mathematics, Pennsylvania State University
}\par\nopagebreak
\textit{E-mail address}:\: \texttt{adkanigowski@gmail.com}

\medskip

\textsc{School of Mathematics, University of Bristol
}\par\nopagebreak
\textit{E-mail address}:\: \texttt{solomko.anton@gmail.com}

\end{document}